 \newtheorem{thm}{Theorem}[section]
 \newtheorem{cor}[thm]{Corollary}
 \newtheorem{lem}[thm]{Lemma}
 \newtheorem{prop}[thm]{Proposition}
 \theoremstyle{definition}
 \newtheorem{dfn}[thm]{Definition}
 \theoremstyle{remark}
 \newtheorem{rem}[thm]{Remark}
 \newtheorem{example}[thm]{Example}
 \numberwithin{equation}{section}
\newcommand{\C}{\mathbb{C}}
\newcommand{\R}{\mathbb{R}}
\DeclareMathOperator{\re}{Re}
\newcommand{\set}[1]{\{#1\}}
\newcommand{\enorm}{\|\cdot\|}
\newcommand{\ip}[1]{\langle #1 \rangle}
\begin{document}
\title[An orthogonality relation in complex normed spaces]%
{An orthogonality relation in complex normed spaces based on norm derivatives}
\author{S.~M.~Enderami}

\address{%
School of Mathematics and Computer Sciences,\\
Damghan University, P.O.BOX 36715-364, Damghan, Iran.}
\email{sm.enderami@std.du.ac.ir}

\author{M.~Abtahi}
\address{%
School of Mathematics and Computer Sciences,\\
Damghan University, P.O.BOX 36715-364, Damghan, Iran.}
\email{abtahi@du.ac.ir}

\thanks{Corresponding author (M. Abtahi) Tel: +982335220092, Email: abtahi@du.ac.ir}

\author{A.~Zamani}
\address{%
School of Mathematics and Computer Sciences,\\
Damghan University, P.O.BOX 36715-364, Damghan, Iran.}
\email{zamani.ali85@yahoo.com}

\author{P.~W{\'o}jcik}
\address{%
Institute of Mathematics, Pedagogical University of Cracow,
Podchor\b{a}\.{z}ych~2, 30-084 Krak\'{o}w, Poland.}
\email{pawel.wojcik@up.krakow.pl}

\subjclass{Primary 46B20; Secondary 46C50, 47B49.}

\keywords{Norm derivatives, orthogonality, orthogonality preserving mappings, inner product space.}
\begin{abstract}
Let $X$ be a complex normed space. Based on the right norm derivative $\rho_{_{+}}$,
we define a mapping $\rho_{_{\infty}}$ by
\begin{equation*}
\rho_{_{\infty}}(x,y) = \frac1\pi\int_0^{2\pi}e^{i\theta}\rho_{_{+}}(x,e^{i\theta}y)d\theta
\quad(x,y\in X).
\end{equation*}
The mapping $\rho_{_{\infty}}$ has a good response to some geometrical properties of
$X$. For instance, we prove that $\rho_{_{\infty}}(x,y)=\rho_{_{\infty}}(y,x)$
for all $x, y \in X$ if and only if $X$ is an inner product space.
In addition, we define a $\rho_{_{\infty}}$-orthogonality in $X$ and show that a linear mapping
preserving $\rho_{_{\infty}}$-orthogonality has to be a scalar multiple
of an isometry. A number of challenging problems in the geometry of complex normed spaces are also discussed.
\end{abstract}
\maketitle

\section{Introduction and preliminaries}
Let $(X,\|\!\cdot\!\|)$ be a complex normed space and $X^*$ its topological dual space.
Let $S_X$ denote the unit sphere of $X$. For a fixed point $x\in X$, by $J(x)$
we mean the set of supporting functionals at $x$, i.e.,
\begin{equation*}
J(x)=\left\{f\in S_{X^*}: \,f(x) = \|x\|\right\}.
\end{equation*}
The Hahn--Banach theorem guarantees that the set $J(x)$ is always nonempty.
An element $x\in X\setminus\{0\}$ is said to be a smooth point if $J(x)$ is a singleton.
A normed space $X$ is called smooth if every $x\in X\setminus\{0\}$ is a smooth point.
Given two elements $x, y\in X$, let ${\rm conv}\{x,y\}$ denote the closed line segment joining $x$ and $y$.
The normed space $X$ is called rotund or strictly convex if
\begin{equation*}
\forall_{x, y \in S_X}\,\, {\rm conv}\{x,y\} \subseteq S_X \Longrightarrow x=y.
\end{equation*}
By $\mathcal{R}(X)$ we denote (see \cite{St.Wo}) the length of the ``longest'' line segment lying
on the unit sphere; more precisely,
\begin{equation*}
\mathcal{R}(X):=\sup\left\{\|x-y\|: {\rm conv}\{x,y\}\subseteq S_X\right\}.
\end{equation*}
Clearly, $\mathcal{R}(X)=0$ if and only if $X$ is rotund.
Due to G.~Lumer \cite{Lumer} and J.~R.~Giles \cite{giles} in every normed space $(X,\|\!\cdot\!\|)$,
there exists a mapping $[\cdot, \cdot] : X \times X \to \mathbb{C}$ satisfying the following properties:
\begin{enumerate}[]
\item (sip1) \label{sip1.perp}
$[\alpha x + y, z] = \alpha [x, z] + [y, z]$, for all $x, y, z \in X$ and $\alpha, \beta\in\mathbb{C}$,
\item (sip2) \label{sip2.perp}
$[x, \alpha y] = \overline{\alpha}[x, y]$, for all $x, y \in X$ and $\alpha\in\mathbb{C}$,
\item (sip3) \label{sip3.perp}
$[x, x] = \|x\|^2$, for all $x\in X$,
\item (sip4)  \label{sip4.perp}
$|[x, y]| \leq \|x\|\|y\|$, for all $x, y \in X$.
\end{enumerate}
Such a mapping is called a semi-inner product (s.i.p.) in $X$
(generating the norm $\|\!\cdot\!\|$). There may exist infinitely many different semi-inner products in
$X$. There is a unique one if and only if $X$ is smooth.
If $X$ is an inner product space, the only s.i.p. on $X$ is the inner product itself.
We define two mappings $\rho_{_{+}}, \rho_{_{-}}:X\times X\rightarrow\mathbb{R}$ by the formulas
\begin{align*}
\rho_{_{\pm}}(x,y):=\lim_{t\rightarrow0^{\pm}}\frac{\|x+ty\|^2-\|x\|^2}{2t}=\|x\|\lim_{t\rightarrow0^{\pm}}\frac{\|x+ty\|-\|x\|}{t}.
\end{align*}
The convexity of the norm yields that the above definitions are meaningful. These mappings
are called the norm derivatives and their following useful
properties can be found, e.g. in \cite{A.S.T, Dra}. For every $x$ and $y$ in $X$ and
for every $\alpha = |\alpha|e^{i\theta}$, $\beta = |\beta|e^{i\omega}$ in $\mathbb{C}$, we have
\begin{enumerate}[]
\item (nd1) \label{nd1.perp}
$\rho_{_{-}}(x,y) = -\rho_{_{+}}(x,-y)$,
\item (nd2) \label{nd2.perp}
$\rho_{_{+}}(x,\alpha x + y) = {\rm Re}\,\alpha \,\|x\|^2 + \rho_{_{+}}(x,y)$,
\item (nd3)]\label{nd3.perp}
$\rho_{_{+}}(\alpha x,\beta y) = |\alpha \beta|\rho_{_{+}}(x,e^{i(\omega - \theta)}y)$,
\item (nd4) \label{nd4.perp}
$|\rho_{_{+}}(x,y)| \leq \|x\|\|y\|$,
\item (nd5) \label{nd5.perp}
$\rho_{_{+}}(x,y) =\|x\|\max\{{\rm Re}\,f(y): f\in J(x)\}$,
\item (nd6)  \label{nd6.perp}
$\rho_{_{+}}$ is continuous with respect to the second variable.
\end{enumerate}
Norm derivatives have applications
in studying the geometry of normed spaces. We refer the reader to
\cite{A.S.T, Amir, C.W.2, D.A.J, Dra, Ku.Sa, W.Z.L, Z.D}
and references therein for some of the prominent works on the context.

In recent years, several new mappings, as combinations of $\rho_{_{+}}$ and $\rho_{_{-}}$,
have been introduced.
In \cite{Milicic-1987}, Mili\v{c}i\'c introduced the mapping $\rho$ by
\begin{align}\label{eqn:dfn-of-rho}
\rho(x,y) = \frac{\rho_{_{+}}(x,y) + \rho_{_{-}}(x,y)}{2}.
\end{align}
For $\lambda\in[0,1]$, in \cite{Z.M}, the convex combination
$\rho_{_{\lambda}}$ of $\rho_{_{+}}$ and $\rho_{_{-}}$ is defined by
\begin{align*}
\rho_{_{\lambda}}(x,y)=\lambda\rho_-(x,y)+(1-\lambda)\rho_+(x,y).
\end{align*}
Later, in \cite{D.A.J}, for $\lambda\in[0,1]$ and $\upsilon=\frac{1}{2k-1}$ with $k\in\mathbb{N}$,
the mapping $\rho_{_{\lambda}}^{\upsilon}$
as a generalization of $\rho_\lambda$ is introduced as follows
\begin{align*}
\rho_{_{\lambda}}^{\upsilon}(x,y)
=\lambda\rho^{\upsilon}_{_{-}}(x,y)\rho^{1-\upsilon}_{_{+}}
(x,y)+(1-\lambda)\rho^{\upsilon}_{_{+}}(x,y)\rho^{1-\upsilon}_{_{-}}(x,y).
\end{align*}

Obviously, depending on the problem under consideration, some of these mappings may
appear more convenient and useful than the others
(see \cite{A.S.T, C.L.2015, Ch-Sik-Wo-2020, C.W.1, D.A.J,
Milicic-1987, M.Z.D, Sain 2021, Wo.2016, Wo.LAA.2019, Z.M}).

Recall that, in an inner product space, two elements are orthogonal if their inner product is zero.
The concept of orthogonality in an arbitrary normed space may be introduced
in various ways (e.g. see \cite{Alonso.Martini.Wu, A.S.T, Amir}).
In a normed space $X$ with a semi inner product $[\cdot, \cdot]$ a semi-orthogonality
of elements $x$ and $y$ is naturally defined by
\begin{align*}
x\perp_{s} y: \Leftrightarrow [y, x] = 0.
\end{align*}
Let $\diamondsuit\in\{\rho_{_{+}}, \rho_{_{-}}, \rho, \rho_{_{\lambda}}, \rho_{_{\lambda}}^{\upsilon}\}$.
Given $x, y \in X$, $x$ is said to be $\diamondsuit$-orthogonal to $y$, written as $x\perp_{\diamondsuit} y$, if $\diamondsuit(x, y) = 0$.
Another concept of orthogonality is the Birkhoff--James orthogonality (see \cite{B, J.1}).
It is said that $x$ is orthogonal to $y$ in the Birkhoff--James sense, in short,
$x\perp_{B} y$, if $\|x\| \leq \|x+\xi y\|$ for all $\xi \in\mathbb{C}$.
James in \cite[Theorem~2.1]{J.2} proved that if $x\in X\setminus\{0\}$
and $y\in X$, then $x\perp_{B} y$ if and only if there exists $f\in J(x)$ such that $f(y) = 0$.

The paper is organized as follows.
In Section 2, based on the right norm derivative $\rho_{_{+}}$,
we define a mapping $\rho_{_{\infty}}:X\times X\rightarrow \C$ by
\begin{equation*}
\rho_{_{\infty}}(x,y) = \frac1\pi\int_0^{2\pi}e^{i\theta}\rho_{_{+}}(x,e^{i\theta}y)d\theta,
\end{equation*}
and investigate its basic properties.
In particular, for arbitrary elements $x$ and $y$ of $X$ we show that
\begin{equation*}
|\rho_{_{\infty}}(x,y) |\leq \big(1+2\mathcal{R}(X^*)\big)\|x\|\|y\|.
\end{equation*}
In Section 3 we define an orthogonality relation in $X$ based on the mapping $\rho_{_{\infty}}$ as follows
\begin{align*}
x\perp_{\rho_{_{\infty}}} y: \Leftrightarrow \rho_{_{\infty}}(x,y) = 0 \quad (x, y \in X).
\end{align*}

For a given semi-inner product $[\cdot,\cdot]$ on $X$,
we prove that the condition $\perp_{\rho_{_{\infty}}} = \perp_{s}$ is equivalent to
$\rho_{_{\infty}}(x,y)= \overline{[y, x]}$, for all $x, y\in X$.
Further, we show that $\perp_{\rho_{_{\infty}}}\subseteq \perp_{B}$ if and only if the Cauchy--Schwarz inequality
$\big|\rho_{_{\infty}}(x,y)\big|\leq\|x\|\|y\|$ holds for all $x, y\in X$.
In Section 4, a characterization of complex inner product spaces is presented
in terms of $\rho_{_{\infty}}$; it is proved that $X$ is an inner product
space if and only if $\rho_\infty(x,y)=\rho_\infty(y,x)$, for all $x,y\in X$.
In the last section we consider a class of linear mappings preserving the $\rho_{_{\infty}}$-orthogonality.
More precisely, we show that, if a bounded linear mapping $T$ between complex normed spaces $X$ and $Y$ satisfies the following property
\begin{equation*}
\forall_{x, y \in X}\,\, x\perp_{\rho_{_{\infty}}} y \Longrightarrow Tx\perp_{\rho_{_{\infty}}} Ty,
\end{equation*}
then $\rho_{_{\infty}}(Tx,Ty) = \|T\|^2 \rho_{_{\infty}}(x,y)$ for all $x, y\in X$.
\section{A functional based on the norm derivatives}
Let $X$ be a normed space and consider the mapping $\rho$ defined in \eqref{eqn:dfn-of-rho}.
By (nd1), for every $x, y \in X$ we have
\begin{align}\label{eqn:rho-2}
\rho(x,y)
= \frac{\rho_{_{+}}(x,y)-\rho_{_{+}}(x,-y)}{2}
= \frac12\sum_{k=1}^2 c_k \rho_{_{+}}(x,c_ky),
\end{align}
where $c_1=1$ and $c_2=-1$ are the 2nd roots of unity.
If $X$ is a real inner product space, then $\rho(x,y)=\ip{x,y}$. However, in case $X$
is a complex inner product space, we have $\rho(x,y) = \re\ip{x,y}$.
The reason for this adverse situation is that $c_1^2+c_2^2=2$.
Something magical happens if we consider $n$th roots of unity, for $n>2$.
\begin{lem}\label{lem:sum=0}
Let $n>2$, and  suppose that $c_1, c_2,\dotsc, c_n$ are the $n$th roots of unity.
Then $\sum_{k=1}^n c_k^2 =0$.
\end{lem}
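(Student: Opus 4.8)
The plan is to recognize the sum as a finite geometric series and to exploit the hypothesis $n>2$ to guarantee that its common ratio differs from $1$. First I would relabel the roots of unity as $c_k=\zeta^k$ for $k=1,\dots,n$, where $\zeta=e^{2\pi i/n}$ is a fixed primitive $n$th root of unity; this is merely a reindexing of the set $\{c_1,\dots,c_n\}$, so it changes nothing. Then $c_k^2=(\zeta^2)^k$, and consequently
\begin{equation*}
\sum_{k=1}^n c_k^2=\sum_{k=1}^n(\zeta^2)^k
\end{equation*}
is a geometric sum with common ratio $r=\zeta^2$.

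The key observation — and the only place the hypothesis $n>2$ is actually used — is that $r=\zeta^2\neq 1$. Indeed, $\zeta^2=e^{4\pi i/n}=1$ would force $n$ to divide $2$, i.e.\ $n\in\{1,2\}$, which is excluded. With $r\neq 1$ secured, the standard closed form for a geometric sum applies, and I would compute
\begin{equation*}
\sum_{k=1}^n r^k=r\,\frac{r^n-1}{r-1}=\zeta^2\,\frac{(\zeta^n)^2-1}{\zeta^2-1}.
\end{equation*}
Since $\zeta^n=1$, the numerator vanishes while the denominator is nonzero, which yields $\sum_{k=1}^n c_k^2=0$, as claimed.

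There is essentially no obstacle to overcome: the entire content reduces to the elementary fact that a complete sweep of $n$th roots of unity (here, of their squares) sums to zero as soon as the governing ratio is nontrivial. It is worth recording that this argument also explains transparently the degeneracy at $n=2$ remarked upon just before the lemma: in that case $\zeta=e^{\pi i}=-1$, so $r=\zeta^2=1$, the geometric-sum formula breaks down, and the sum collapses to $\sum_{k=1}^2 1=2$, in agreement with the identity $c_1^2+c_2^2=2$ noted above.
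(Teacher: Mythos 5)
Your proof is correct and follows essentially the same route as the paper: both arguments fix the primitive root $\zeta=e^{2\pi i/n}$, recognize $\sum_{k=1}^n c_k^2$ as a geometric sum with ratio $\zeta^2$, and use $n>2$ to ensure $\zeta^2\neq 1$ so that the sum vanishes because $\zeta^{2n}=1$. The only cosmetic difference is that you invoke the closed-form geometric-sum formula where the paper substitutes $z=\zeta^2$ into the factorization $(1-z)(1+z+\dotsb+z^{n-1})=1-z^n$, which is the same identity.
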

\begin{proof}
Let $c=e^{2\pi i/n}$. Then $c,c^2,\dotsc,c^n$ are all of the $n$th roots of unity.
In the equality $$(1-z)(1+z+z^2+\dotsb+z^{n-1}) = 1-z^n,$$ let $z=c^2$.
Since $n>2$, we have $c^2\neq 1$ and thus
\begin{equation*}
1+c^2+c^4+\dotsb+c^{2n-2} = 0,
\end{equation*}
meaning that $\sum_{k=1}^n c_k^2 =0$.
\end{proof}
Inspired by \eqref{eqn:rho-2}, we present the following definition in complex normed spaces.
For an overview of similarities and differences between real and complex normed spaces see \cite{M.M.P.S}.
\begin{dfn}
Suppose that $X$ is a complex normed space. Let $n>2$ and $c_1,c_2,\dotsc,c_n$
be the $n$th roots of unity. Define, for $x,y\in X$,
\begin{equation}\label{eqn:dfn:rho-n}
\rho_{_{n}}(x, y) := \frac2n\sum_{k=1}^n c_k \rho_{_{+}}(x, c_k y).
\end{equation}
\end{dfn}
In the following, some basic properties of the mapping $\rho_{_{n}}$ are established.
\begin{prop}\label{prop:properties-of-rho-n}
Let $(X,\|\!\cdot\!\|)$ be a complex normed space and $n>2$. Then
\begin{enumerate}[\upshape(i)]
\item $\rho_{_{n}}(x,x) = \|x\|^2$, for all $x\in X$,
\item $|\rho_{_{n}}(x,y)| \leq 2\|x\| \|y\|$, for all $x\in X$,
\item if the norm of $X$ comes from an inner product $\ip{\cdot,\cdot}$, then
$\rho_{_{n}}(x,y) = \ip{x,y}$ for all $x,y\in X$.
\end{enumerate}
\end{prop}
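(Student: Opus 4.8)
The plan is to verify the three items directly from the definition \eqref{eqn:dfn:rho-n}, using the elementary properties (nd2) and (nd4) of $\rho_{_{+}}$ and, crucially, Lemma~\ref{lem:sum=0}. I would dispose of (ii) first, since it is purely a triangle-inequality estimate: as $|c_k|=1$, property (nd4) gives $|c_k\,\rho_{_{+}}(x,c_k y)| = |\rho_{_{+}}(x,c_k y)| \le \|x\|\,\|c_k y\| = \|x\|\,\|y\|$ for each $k$; summing the $n$ terms and multiplying by $2/n$ produces the bound $2\|x\|\,\|y\|$.

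For (i), the first step is to evaluate $\rho_{_{+}}(x,c_k x)$. Taking $y=0$ in (nd2) gives $\rho_{_{+}}(x,c_k x) = (\re c_k)\,\|x\|^2$, so that $\rho_{_{n}}(x,x) = \frac{2\|x\|^2}{n}\sum_{k=1}^n c_k\,\re c_k$. The key manipulation is to rewrite the summand using $\re c_k = \tfrac12(c_k+\overline{c_k})$ together with $c_k\overline{c_k}=|c_k|^2=1$, which yields $c_k\,\re c_k = \tfrac12(c_k^2+1)$. Summing over $k$ and applying Lemma~\ref{lem:sum=0} to annihilate $\sum_{k=1}^n c_k^2$ leaves $\sum_{k=1}^n c_k\,\re c_k = n/2$, and hence $\rho_{_{n}}(x,x)=\|x\|^2$.

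Part (iii) proceeds in exactly the same spirit. In an inner product space one has $\rho_{_{+}}(x,y)=\re\ip{x,y}$ (obtained at once by expanding $\|x+ty\|^2$ and letting $t\to0$), so writing $w=\ip{x,y}$ and using conjugate-linearity of the inner product in the second slot gives $\rho_{_{+}}(x,c_k y)=\re(\overline{c_k}\,w)$. The same real-part identity then produces $c_k\,\re(\overline{c_k}\,w)=\tfrac12(w+c_k^2\overline{w})$; summing and invoking Lemma~\ref{lem:sum=0} once more to kill the $\overline{w}\sum c_k^2$ term collapses the sum to $nw/2$, whence $\rho_{_{n}}(x,y)=\ip{x,y}$.

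None of these steps is a serious obstacle; the content is entirely in the cancellation $\sum_{k=1}^n c_k^2=0$ furnished by Lemma~\ref{lem:sum=0}. This is precisely the mechanism that forces $\rho_{_{n}}$ to recover the \emph{full} complex inner product in (iii), in contrast to the case $n=2$ (where $\sum c_k^2=2$) in which $\rho$ recovers only $\re\ip{x,y}$. The one thing to keep straight throughout is the role of $|c_k|=1$, which is what converts products $c_k\overline{c_k}$ into $1$ and isolates the $c_k^2$ terms that Lemma~\ref{lem:sum=0} eliminates.
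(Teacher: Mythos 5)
Your proposal is correct and follows essentially the same route as the paper's proof: (nd4) with $|c_k|=1$ for (ii), the identity $c_k\re c_k=\tfrac12(c_k^2+1)$ combined with Lemma~\ref{lem:sum=0} for (i), and $\rho_{_{+}}(x,c_ky)=\re\bigl(\bar c_k\ip{x,y}\bigr)$ followed by the same cancellation $\sum_{k=1}^n c_k^2=0$ for (iii). The only cosmetic differences are that you derive $\rho_{_{+}}(x,c_kx)=(\re c_k)\|x\|^2$ explicitly from (nd2) with $y=0$ (the paper states this directly, citing (nd1)) and that you justify $\rho_{_{+}}(x,y)=\re\ip{x,y}$ by expanding the norm, which the paper takes as known.
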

\begin{proof}
(i) Let $x\in X$. By (nd1) and Lemma \ref{lem:sum=0}, we have
\begin{align*}
\rho_{_{n}}(x,x)
& = \frac2n \sum_{k=1}^n c_k\rho_{_{+}}(x, c_k x)
      = \frac2n \sum_{k=1}^n c_k \re (c_k) \|x\|^2 \\
    & = \frac{\|x\|^2}{n} \sum_{k=1}^n c_k  (c_k + \bar c_k)
      = \frac{\|x\|^2}{n} \sum_{k=1}^n (c_k^2 + |c_k|^2) \\
    & = \frac{\|x\|^2}{n} \Bigl(\sum_{k=1}^n c_k^2 + \sum_{k=1}^n |c_k|^2 \Bigr)
      = \frac{\|x\|^2}{n} (0+n)
      = \|x\|^2.
\end{align*}

(ii) Let $x, y\in X$. By (nd4), we get
\begin{align*}
  |\rho_{_{n}}(x,y)|
    & = \frac2n \Bigl|\sum_{k=1}^n c_k\rho_{_{+}}(x, c_k y)\Bigr|
    \leq \frac2n \sum_{k=1}^n |c_k|\Bigl|\rho_{_{+}}(x, c_k y)\Bigr|
    \\& \leq \frac2n \sum_{k=1}^n |c_k|\|x\| \|c_k y\|
    = \frac2n \sum_{k=1}^n |c_k| \|x\| \|y\| = 2\|x\| \|y\|.
\end{align*}

(iii) Let the norm of $X$ come from an inner product $\ip{\cdot,\cdot}$.
Then, for every $x, y\in X$, we have
  \begin{align*}
    \rho_{_{n}}(x,y)
       & = \frac2n \sum_{k=1}^n c_k \rho_{_{+}}(x,c_k y)
         = \frac2n \sum_{k=1}^n c_k \re \ip{x,c_k y} \\
       & = \frac1{n} \sum_{k=1}^n c_k \bigl(\ip{x,c_k y} + \overline{\ip{x,c_k y}}\bigr)\\
       &  = \frac1{n} \sum_{k=1}^n c_k \bigl(\bar c_k\ip{x, y} + c_k \overline{\ip{x, y}}\bigr) \\
       &  = \frac{\ip{x, y}}{n} \sum_{k=1}^n |c_k|^2
             + \frac{\overline{\ip{x, y}}}{n} \sum_{k=1}^n c_k^2
         = \ip{x,y}.
         \qedhere
  \end{align*}
\end{proof}
To proceed, we recall the following fact for the class of continuous functions.
If $f:[0,1]\to\C$ is a continuous function, then
\begin{equation}
  \lim_{n\to \infty} \frac 1n\sum_{k=1}^n f\Bigl(\frac kn \Bigr)
    = \int_0^1 f(t)dt.
\end{equation}
Replacing $c_k$ with $e^{2k\pi i/n}$, we can rephrase \eqref{eqn:dfn:rho-n}
as follows
\begin{equation}\label{eqn:dfn:rho-n-2}
  \rho_{_{n}}(x,y) = \frac2n\sum_{k=1}^n e^{2k\pi i/n} \rho_{_{+}}(x,e^{2k\pi i/n} y).
\end{equation}
Now, letting $n\to \infty$, in \eqref{eqn:dfn:rho-n-2},
\begin{equation*}
  \lim_{n\to\infty }\frac2n\sum_{k=1}^n e^{2k\pi i/n} \rho_{_{+}}(x,e^{2k\pi i/n} y)
   = 2\int_0^1e^{i 2\pi t}\rho_{_{+}}(x, e^{i2\pi t}y) dt.
\end{equation*}
Applying substitution $\theta=2\pi t$, we get
\begin{equation}
   \lim_{n\to\infty}\rho_{_{n}}(x,y) = \frac1{\pi} \int_0^{2\pi}e^{i\theta}\rho_{_{+}}(x, e^{i\theta}y) d\theta.
\end{equation}
This motivates us to present the following definition.
\begin{dfn}\label{dfn:rho-infty}
  Let $X$ be a complex normed space. Define, for $x,y\in X$,
  \begin{equation*}
    \rho_{_{\infty}}(x,y) = \lim_{n\to\infty}\rho_{_{n}}(x,y)
        = \frac1{\pi}\int_{0}^{2\pi}e^{i\theta}\rho_{_{+}}(x,e^{i\theta}y)d\theta.
  \end{equation*}
\end{dfn}
\begin{rem}
If the norm of $X$ comes from an inner product $\ip{\cdot,\cdot}$, then
by Proposition \ref{prop:properties-of-rho-n}(iii), for all $x,y\in X$, we have
\begin{align*}
\rho_{_{\infty}}(x,y)=\lim_{n\to\infty}\rho_{_{n}}(x,y)=\ip{x,y}.
\end{align*}
\end{rem}

Before further investigation, we provide our discussion with an example.
\begin{example}\label{exa:rho-infty-in-ell-1}
  Let $X=\ell^1$ be the space of all summable complex sequences
  with ${\|\!\cdot\!\|}_1$. For two sequences $x=(x_k)$ and $y=(y_k)$ in $\ell^1$, and
  for every $\theta \in \R$, we have
  \begin{align}
    \rho_{_{+}}(x,e^{i\theta}y)
       & = {\|x\|}_1  \lim_{t\to0+}\frac{{\|x+ t e^{i\theta}y\|}_1-{\|x\|}_1}{t} \notag\\
       & = {\|x\|}_1  \lim_{t\to0+}\sum_{k=1}^\infty\frac{|x_k+ t e^{i\theta}y_k|-|x_k|}{t} \notag\\
       & = {\|x\|}_1 \lim_{t\to0} \Biggl(\sum_{x_k\neq0} |x_k|  \frac{|1+t e^{i\theta}\frac{y_k}{x_k}|-1}t
                         + \sum_{x_k=0} |e^{i\theta}y_k|  \Biggr) \notag\\
       & = {\|x\|}_1 \lim_{t\to0} \Biggl(\sum_{x_k\neq0} |x_k|
                       \frac{2t\re \bigl(e^{i\theta}\frac{y_k}{x_k}\bigr) +t^2 \bigl|\frac{y_k}{x_k}\bigr|^2}
                        {t\bigl(\bigl|1+t e^{i\theta}\frac{y_k}{x_k}\bigr|+1\bigr)}
                         + \sum_{x_k=0} |y_k|  \Biggr) \notag\\
       & = {\|x\|}_1 \Biggl(\sum_{x_k\neq0} |x_k| \re\Bigl(e^{i\theta}\frac{y_k}{x_k} \Bigr)
           + \sum_{x_k=0} |y_k|  \Biggr). \label{eqn:rho+in-ell-1}
  \end{align}
Therefore,
\begin{align*}
  \rho_{_{\infty}}(x,y)
    & = \frac1{\pi} \int_0^{2\pi} e^{i\theta}\rho_{_{+}}(x,e^{i\theta}y) d\theta \\
    & = \frac{{\|x\|}_1}{\pi} \Biggl(
       \sum_{x_k\neq0} |x_k| \int_0^{2\pi} e^{i\theta}\re\Bigl(e^{i\theta}\frac{y_k}{x_k}\Bigr) d\theta
           + \sum_{x_k=0} |y_k| \int_0^{2\pi} e^{i\theta } d\theta\Biggr)\\
    & = \frac{{\|x\|}_1}{\pi} \Biggl(
       \sum_{x_k\neq0} |x_k| \int_0^{2\pi} \frac{e^{i\theta}}2\Bigl(e^{i\theta}\frac{y_k}{x_k}+e^{-i\theta}\frac{\bar y_k}{\bar x_k}\Bigr) d\theta
           + 0\Biggr)\\
    & = \frac{{\|x\|}_1}{\pi} \sum_{x_k\neq0} \frac{|x_k|}2 \Bigl(\frac{y_k}{x_k}
         \int_0^{2\pi} e^{2i\theta}d\theta + \frac{\bar y_k}{\bar x_k}\int_0^{2\pi}d\theta\Bigr)\\
    & = \frac{{\|x\|}_1}{\pi} \sum_{x_k\neq0} \frac{|x_k|}2 \Bigl(0 + 2\pi \frac{\bar y_k}{\bar x_k}\Bigr)\\
    & = {\|x\|}_1 \sum_{x_k\neq0} \frac{x_k \bar y_k}{|x_k|}.
\end{align*}
Hence,
\begin{equation}\label{eqn:rho-infty-in-ell-1}
 \rho_{_{\infty}}(x,y) = {\|x\|}_1 \sum_{x_k\neq0} \frac{x_k \bar y_k}{|x_k|}
 \quad (x,y\in \ell^1).
\end{equation}
\end{example}
In the following, basic properties of $\rho_{_{\infty}}$ are investigated.
Since the integrand functions in the discussion are $2\pi$-periodic,
we will utilize the following equality in our discussion several times;
  \begin{equation}\label{eqn:2pi-periodic}
    \int_{\phi}^{2\pi+\phi} u(\theta)d\theta
     = \int_0^{2\pi} u(\theta)d\theta \quad\big(\text{$u(\cdot)$ is $2\pi$-periodic, $\phi\in\R$}\big).
  \end{equation}
\begin{prop}\label{prop:rho-infty(ax,ax+y)}
  Let $(X, \|\!\cdot\!\|)$ be a complex normed space, $x,y\in X$, and $\alpha,\beta\in\C$. Then
  \begin{enumerate}[\upshape(i)]
    \item \label{item:rho-infty(alpha x,y)}
      $\rho_{_{\infty}}(\alpha x,\beta y)= \alpha \bar{\beta} \rho_{_{\infty}}(x,y)$.

    \item \label{item:rho-infty(x,alpha x+y)}
       $\rho_{_{\infty}}(x,\alpha x + y)
       = \rho_{_{\infty}}(x,\alpha x) + \rho_{_{\infty}}(x,y)
       = \bar\alpha \|x\|^2 + \rho_{_{\infty}}(x,y)$.
  \end{enumerate}
\end{prop}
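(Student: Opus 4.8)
The plan is to reduce each of the two identities to a property of the integrand $\rho_{_{+}}$, combined with an elementary change of variables and the periodicity relation \eqref{eqn:2pi-periodic}. For part \ref{item:rho-infty(alpha x,y)}, I would write $\alpha=|\alpha|e^{i\phi}$ and $\beta=|\beta|e^{i\psi}$ and start from the definition
\[
\rho_{_{\infty}}(\alpha x,\beta y)=\frac1\pi\int_0^{2\pi}e^{i\theta}\rho_{_{+}}(\alpha x,e^{i\theta}\beta y)\,d\theta.
\]
Inside the integral the second slot carries the scalar $e^{i\theta}\beta=|\beta|e^{i(\theta+\psi)}$, so property (nd3) applies with first argument $\phi$ and second argument $\theta+\psi$, giving $\rho_{_{+}}(\alpha x,e^{i\theta}\beta y)=|\alpha||\beta|\,\rho_{_{+}}\bigl(x,e^{i(\theta+\psi-\phi)}y\bigr)$. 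The substitution $\sigma=\theta+\psi-\phi$ then turns the integral into $|\alpha||\beta|e^{i(\phi-\psi)}\frac1\pi\int_{\psi-\phi}^{2\pi+\psi-\phi}e^{i\sigma}\rho_{_{+}}(x,e^{i\sigma}y)\,d\sigma$, and since the integrand $e^{i\sigma}\rho_{_{+}}(x,e^{i\sigma}y)$ is $2\pi$-periodic in $\sigma$, \eqref{eqn:2pi-periodic} lets me shift the limits back to $[0,2\pi]$. Recognizing the remaining integral as $\rho_{_{\infty}}(x,y)$ and observing that $|\alpha||\beta|e^{i(\phi-\psi)}=\alpha\bar\beta$ completes this part.

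For part \ref{item:rho-infty(x,alpha x+y)}, I would again begin from the definition, expand $e^{i\theta}(\alpha x+y)=(e^{i\theta}\alpha)x+e^{i\theta}y$, and apply (nd2) with scalar $e^{i\theta}\alpha$ to obtain $\rho_{_{+}}\bigl(x,e^{i\theta}(\alpha x+y)\bigr)=\re(e^{i\theta}\alpha)\|x\|^2+\rho_{_{+}}(x,e^{i\theta}y)$. Splitting the integral, the second piece is exactly $\rho_{_{\infty}}(x,y)$, while the first reduces to the elementary computation of $\int_0^{2\pi}e^{i\theta}\re(e^{i\theta}\alpha)\,d\theta$. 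Writing $\re(e^{i\theta}\alpha)=\tfrac12\bigl(e^{i\theta}\alpha+e^{-i\theta}\bar\alpha\bigr)$ shows that the $e^{2i\theta}$ term integrates to zero and the constant term yields $\pi\bar\alpha$, so the first piece equals $\bar\alpha\|x\|^2$. This gives the rightmost equality. The middle equality is then immediate, since part \ref{item:rho-infty(alpha x,y)} together with the normalization $\rho_{_{\infty}}(x,x)=\|x\|^2$ (inherited as a limit from Proposition \ref{prop:properties-of-rho-n}(i)) gives $\rho_{_{\infty}}(x,\alpha x)=\bar\alpha\|x\|^2$.

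The computations above are essentially mechanical, so the only point requiring genuine care is the bookkeeping of the phase shift in (nd3) together with the verification that the relevant integrand is $2\pi$-periodic in the new variable; this is what legitimizes the substitution-and-shift step in part \ref{item:rho-infty(alpha x,y)}. Here periodicity holds because $e^{i(\sigma+2\pi)}y=e^{i\sigma}y$, so $\rho_{_{+}}(x,e^{i(\sigma+2\pi)}y)=\rho_{_{+}}(x,e^{i\sigma}y)$, and continuity of the integrand follows from (nd6). Once this is handled correctly, both identities follow directly.
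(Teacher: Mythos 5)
Your proof is correct and follows essentially the same route as the paper: (nd3) plus the substitution $\sigma=\theta+\psi-\phi$ and the $2\pi$-periodicity shift for part (i), and (nd2) plus the elementary integral $\int_0^{2\pi}e^{i\theta}\re(e^{i\theta}\alpha)\,d\theta=\pi\bar\alpha$ for part (ii). Your only addition is an explicit justification of the middle equality $\rho_{_{\infty}}(x,\alpha x)=\bar\alpha\|x\|^2$ via part (i) and $\rho_{_{\infty}}(x,x)=\|x\|^2$, which the paper leaves implicit; this is a harmless (indeed welcome) bit of extra care.
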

\begin{proof}
  \eqref{item:rho-infty(alpha x,y)} Let $\alpha = |\alpha| e^{i\phi}$ and $\beta = |\beta| e^{i\psi}$,
  for some $\phi,\psi\in [0,2\pi)$. Then, by \eqref{eqn:2pi-periodic},
  \begin{align*}
    \rho_{_{\infty}}(\alpha x,\beta y)
      & = \frac1{\pi}\int_{0}^{2\pi}e^{i\theta} \rho_{_{+}}(|\alpha|e^{i\phi}x , |\beta|e^{i(\theta+\psi)}y)d\theta \\
      & = \frac{|\alpha\beta|}{\pi}\int_{0}^{2\pi}e^{i\theta} \rho_{_{+}}(x,e^{i(\theta+\psi-\phi)}y)d\theta \\
      & = \frac{|\alpha\beta|}{\pi}\int_{\psi-\phi}^{2\pi+\psi-\phi}e^{i(\xi+\phi-\psi)} \rho_{_{+}}(x,e^{i\xi}y)d \xi \\
      & = \frac{|\alpha\beta|e^{i(\phi-\psi)}}{\pi}\int_0^{2\pi}e^{i\xi} \rho_{_{+}}(x,e^{i\xi}y)d\xi \\
      & = \alpha \bar\beta\rho_{_{\infty}}(x,y).
  \end{align*}

  \eqref{item:rho-infty(x,alpha x+y)}
   First, note that
  \begin{align*}
    \int_0^{2\pi} e^{i\theta} \re(e^{i\theta}\alpha)d\theta
       &  = \frac12 \int_0^{2\pi} e^{i\theta} (e^{i\theta}\alpha+e^{-i\theta} \bar\alpha) d\theta \\
       &  = \frac12 \int_0^{2\pi} (e^{2\theta i} \alpha + \bar\alpha) d\theta \\
       &  = \frac\alpha 2 \int_0^{2\pi} e^{2\theta i} d\theta + \frac{\bar\alpha}2 \int_0^{2\pi} d\theta
        = 0 + \pi \bar \alpha = \pi \bar\alpha.
  \end{align*}
Now, write
  \begin{align*}
    \rho_{_{\infty}}(x, \alpha x + y)
      & = \frac1{\pi}\int_{0}^{2\pi}e^{i\theta} \rho_{_{+}}\bigl(x,e^{i\theta}(\alpha x + y)\bigr)d\theta \\
      & = \frac1{\pi}\int_{0}^{2\pi}e^{i\theta} \rho_{_{+}}(x,e^{i\theta}\alpha x + e^{i\theta} y)d\theta\\
      & =  \frac1{\pi}\int_{0}^{2\pi}e^{i\theta} \bigl( \re(e^{i\theta}\alpha)\|x\|^2+\rho_{_{+}}(x,e^{i\theta}y) \bigr)d\theta\\
      & =  \frac{\|x\|^2}\pi \int_{0}^{2\pi}e^{i\theta} \re(e^{i\theta}\alpha)d\theta
           + \frac1{\pi}\int_{0}^{2\pi}e^{i\theta} \rho_{_{+}}(x,e^{i\theta}y) d\theta\\
      & = \bar\alpha \|x\|^2+ \rho_{_{\infty}}(x,y).
    \end{align*}
\end{proof}
Here we present one of the main results of this paper.
\begin{thm}\label{thm:Cauchy-Sw-inequality}
 Let $(X, \|\!\cdot\!\|)$ be a complex normed space, and $x,y\in X$. Then
 \begin{equation}\label{eqn:CS-inequality-like}
   |\rho_{_{\infty}}(x,y)|\leq \big(1+2\mathcal{R}(X^*)\big)\|x\|\, \|y\|.
 \end{equation}
\end{thm}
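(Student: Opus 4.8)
The plan is to reduce the inequality to a single geometric fact about the set $J(x)$ of supporting functionals of $x$, namely that its diameter in $X^*$ is controlled by $\mathcal{R}(X^*)$. First I would dispose of the trivial case $x=0$, where both sides vanish, and henceforth assume $x\neq0$. The crucial observation is that $J(x)$ is a \emph{convex} subset of the unit sphere $S_{X^*}$: if $f,g\in J(x)$ and $\lambda\in[0,1]$, then $h:=\lambda f+(1-\lambda)g$ satisfies $h(x)=\|x\|$ and $\|h\|\leq1$, whence $\|h\|\geq h(x)/\|x\|=1$, so that $h\in J(x)$. Consequently, for all $f,g\in J(x)$ the whole segment $\mathrm{conv}\{f,g\}$ lies on $S_{X^*}$, and the very definition of $\mathcal{R}(X^*)$ forces $\|f-g\|\leq\mathcal{R}(X^*)$; in other words, $\operatorname{diam}J(x)\leq\mathcal{R}(X^*)$.

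Next I would rewrite the integrand using (nd5): for each $\theta$,
\[
  \rho_{_{+}}(x,e^{i\theta}y)=\|x\|\max_{f\in J(x)}\re\bigl(e^{i\theta}f(y)\bigr).
\]
Fixing a single functional $f_0\in J(x)$, I split $g(\theta):=\rho_{_{+}}(x,e^{i\theta}y)$ as $g(\theta)=h_0(\theta)+r(\theta)$, where $h_0(\theta):=\|x\|\re\bigl(e^{i\theta}f_0(y)\bigr)$ is the ``smooth part'' carried by $f_0$ and $r(\theta):=g(\theta)-h_0(\theta)$. Since $f_0$ is one competitor in the maximum, $r(\theta)\geq0$; and for every $f\in J(x)$ one has $\re\bigl(e^{i\theta}(f-f_0)(y)\bigr)\leq|(f-f_0)(y)|\leq\|f-f_0\|\,\|y\|\leq\mathcal{R}(X^*)\|y\|$, so that $0\leq r(\theta)\leq\|x\|\,\mathcal{R}(X^*)\,\|y\|$. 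Continuity of $g$, and hence of $r$, follows from (nd6), so every integral below is legitimate.

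Finally I would integrate term by term. For the base term the elementary computation already performed in the proof of Proposition~\ref{prop:rho-infty(ax,ax+y)} (namely $\int_0^{2\pi}e^{i\theta}\re(e^{i\theta}w)\,d\theta=\pi\bar w$) gives $\frac1\pi\int_0^{2\pi}e^{i\theta}h_0(\theta)\,d\theta=\|x\|\,\overline{f_0(y)}$, whose modulus is at most $\|x\|\,\|y\|$ because $\|f_0\|=1$. For the remainder term the uniform bound on $r$ yields
\[
  \Bigl|\tfrac1\pi\int_0^{2\pi}e^{i\theta}r(\theta)\,d\theta\Bigr|
  \leq\tfrac1\pi\int_0^{2\pi}r(\theta)\,d\theta
  \leq2\,\|x\|\,\mathcal{R}(X^*)\,\|y\|.
\]
Combining the two estimates through the triangle inequality then delivers $|\rho_{_{\infty}}(x,y)|\leq\bigl(1+2\mathcal{R}(X^*)\bigr)\|x\|\,\|y\|$, as claimed.

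I expect the only real obstacle to be the first step — recognizing that $\mathcal{R}(X^*)$ controls the diameter of $J(x)$; once the convexity of $J(x)$ and its placement on $S_{X^*}$ are in hand, this is immediate. Everything afterwards rests on the decomposition $g=h_0+r$ and the single integral identity reused from Proposition~\ref{prop:rho-infty(ax,ax+y)}, so no further technical difficulty should arise.
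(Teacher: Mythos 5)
Your proof is correct and follows essentially the same route as the paper's: both rest on the key fact that $J(x)$ is convex and lies on $S_{X^*}$, so that $\|f-g\|\leq\mathcal{R}(X^*)$ for all $f,g\in J(x)$, and both split $\rho_{_{+}}(x,e^{i\theta}y)$ into the part carried by one fixed supporting functional (integrated exactly via $\int_0^{2\pi}e^{i\theta}\re(e^{i\theta}w)\,d\theta=\pi\bar w$, contributing $\|x\|\,\|y\|$) plus a remainder that is pointwise at most $\mathcal{R}(X^*)\|x\|\,\|y\|$ (contributing $2\mathcal{R}(X^*)\|x\|\,\|y\|$). The only difference is cosmetic: the paper first multiplies by a unimodular $\gamma$ to make $\rho_{_{\infty}}(x,y)$ real and then works with the $\cos\theta$-weighted integral, whereas you estimate the complex integral directly, which changes nothing of substance.
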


\begin{proof}
  It is clear that there is a number $\gamma\in\mathbb{C}$, with $|\gamma|=1$, such
  that $|\rho_{_{\infty}}(x,y)|= \gamma \rho_{_{\infty}}(x,y)$.
Applying Proposition \ref{prop:rho-infty(ax,ax+y)}(i), we get
  $|\rho_{_{\infty}}(x,y)|= \rho_{_{\infty}}(x,\overline{\gamma}y)$.
  Put $z=\overline{\gamma}y$. Then we have
  \begin{align*}
    \pi |\rho_{_{\infty}}(x,y)|
      & = \pi\rho_{_{\infty}} (x,z) =\int_{0}^{2\pi}e^{i\theta}\rho_{_{+}}(x,e^{i\theta}z)d\theta.
  \end{align*}
Since $e^{i\theta}=\cos \theta + i \sin \theta$, we obtain
  \begin{equation}\label{eqn:|rho(x,y)|}
     \pi |\rho_{_{\infty}}(x,y)|=\int_{0}^{2\pi}\cos \theta\rho_{_{+}}(x,e^{i\theta}z)d\theta +i \int_{0}^{2\pi}\sin \theta\rho_{_{+}}(x,e^{i\theta}z)d\theta.
  \end{equation}
Since the left side in \eqref{eqn:|rho(x,y)|} is real, the imaginary
part in the right side should be zero, and thus
  \begin{equation}\label{equality-pi-rho-int-xz}
    \pi |\rho_{_{\infty}} (x,y)|=\int_{0}^{2\pi}\cos \theta\rho_{_{+}}(x,e^{i\theta}z)d\theta.
  \end{equation}
It follows from (nd5) that for all $\theta\in[0,2\pi)$ there exists
a functional $f_{\theta}\in J(x)$ such that
\begin{equation}\label{equality-rho-re-cos-sin}
 \rho_{_{+}}(x,e^{i\theta}z)
     =\|x\| \re f_{\theta}(e^{i\theta}z).
\end{equation}
Fix an arbitrary $f\in J(x)$. It is easy to check that
${\rm conv}\{f_{\theta},f\}\subseteq S_{X^*}$. Hence $\|f_{\theta}-f\|\leq\mathcal{R}(X^*)$.
Moreover, the equalities (\ref{equality-pi-rho-int-xz}) and (\ref{equality-rho-re-cos-sin})
become
\begin{align}
    \pi |\rho_{_{\infty}} (x,y)|
      & = \|x\| \int_{0}^{2\pi}\cos \theta\re  f_{\theta}(e^{i\theta}z)d\theta\nonumber\\
      & =\|x\| \left(\int_{0}^{2\pi}\cos \theta\re (f_{\theta}-f)(e^{i\theta}z)d\theta+\int_{0}^{2\pi}\cos \theta\re f(e^{i\theta}z)d\theta\right)\label{second-inetgral-re}.
 \end{align}
Next we have
\begin{align*}
\int_{0}^{2\pi}\cos \theta\re f(e^{i\theta}z)d\theta
      & = \int_{0}^{2\pi}\cos\theta \re f\big((\cos \theta+i \sin \theta) z\big)d\theta\\
      & = \int_{0}^{2\pi}\left(\cos^2\theta\re f\big(z\big)+\cos\theta\sin\theta \re f\big(i z\big)\right)d\theta\\
      & = \re f\big(z\big) \int_{0}^{2\pi}\cos^2\theta d\theta+\re f\big(i z\big)
      \int_{0}^{2\pi}\frac{1}{2}\sin2\theta d\theta.
 \end{align*}

Since $\int_{0}^{2\pi}\cos^2\theta d\theta=\pi$ and
$\int_{0}^{2\pi}\frac{1}{2}\sin2\theta=0$, it follows
from the above equalities that
 \begin{align}\label{integral-re-eq-zero}
\int_{0}^{2\pi}\cos \theta\re f(e^{i\theta}z)d\theta = \pi\re f\big(z\big).
\end{align}

\noindent
Combining (\ref{second-inetgral-re}) with (\ref{integral-re-eq-zero}) yields
 \begin{align*}
    \pi |\rho_{_{\infty}} (x,y)|
      & =\|x\| \left(\int_{0}^{2\pi}\cos \theta \re (f_{\theta}-f)(e^{i\theta}z)d\theta+\pi \re f\big(z\big) \right)\\
      & \leq \|x\| \left(\int_{0}^{2\pi}\|f_{\theta}-f\| \|z\|d\theta + \pi\|z\|\right)\\
      & \leq \|x\| \|z\| \left(2\pi\|f_{\theta}-f\|+\pi\right) \\
      & \leq \|x\|\|z\|\left(2\pi\mathcal{R}(X^*)+\pi\right).
 \end{align*}
Since $\|y\|=\|z\|$, it follows from the above estimation that
\begin{equation*}
  |\rho_{_{\infty}} (x,y)| \leq (1+2\mathcal{R}(X^*))\|x\| \|y\|.
\end{equation*}
\end{proof}
As an immediate consequence of the above result, we obtain the following.
\begin{cor}\label{cor:Cauchy-Sw-inequality-smooth-sp}
Let $(X, \|\!\cdot\!\|)$ be a complex reflexive smooth space. Then
    \begin{equation*}
      |\rho_{_{\infty}}(x,y)|\leq \|x\|\|y\| \quad(x,y\in X).
    \end{equation*}
\end{cor}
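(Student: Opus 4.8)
The plan is to deduce this corollary directly from Theorem \ref{thm:Cauchy-Sw-inequality} by showing that the hypotheses force $\mathcal{R}(X^*)=0$. Recall that $\mathcal{R}(X^*)=0$ holds precisely when $X^*$ is rotund, so the whole matter reduces to the classical duality statement that the dual of a reflexive smooth space is rotund. Once $\mathcal{R}(X^*)=0$ is established, the factor $1+2\mathcal{R}(X^*)$ in \eqref{eqn:CS-inequality-like} collapses to $1$ and the asserted inequality is immediate.

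To prove that $X^*$ is rotund I would argue by contradiction. Suppose it is not; then there exist distinct $f,g\in S_{X^*}$ with ${\rm conv}\{f,g\}\subseteq S_{X^*}$, and in particular $\big\|\tfrac{f+g}{2}\big\|=1$. Since $X$ is reflexive, its closed unit ball is weakly compact, and every $h\in X^*$ is weakly continuous and hence attains its norm on $S_X$. Applying this to $\tfrac{f+g}{2}$, there is a point $x\in S_X$ with $\big(\tfrac{f+g}{2}\big)(x)=1$.

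The smoothness of $X$ then produces the contradiction. Indeed, $|f(x)|\le\|f\|\,\|x\|=1$ and $|g(x)|\le 1$, while the average $\tfrac{f(x)+g(x)}{2}$ equals the point $1$, which is an extreme point of the closed unit disc. A convex combination of two points of the disc can equal an extreme point only when both coincide with it, so $f(x)=g(x)=1$; that is, $f,g\in J(x)$ with $f\neq g$, contradicting the assumption that the smooth point $x$ has a unique supporting functional. Hence $X^*$ is rotund and $\mathcal{R}(X^*)=0$.

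The main obstacle is the step that actually consumes the reflexivity hypothesis, namely guaranteeing that the midpoint functional $\tfrac{f+g}{2}$ attains its norm at a genuine unit vector. Without norm attainment one obtains only an approximate supporting point and the argument fails to close, so reflexivity (equivalently, weak compactness of the unit ball, which makes every bounded functional norm-attaining) is indispensable precisely here. The remaining ingredients — the extreme-point observation in the complex disc and the collapse of the constant in \eqref{eqn:CS-inequality-like} — are routine.
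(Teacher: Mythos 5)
Your proposal is correct and follows essentially the same route as the paper: reduce the corollary to the classical duality fact that the dual of a reflexive smooth space is rotund, so that $\mathcal{R}(X^*)=0$ and the constant in \eqref{eqn:CS-inequality-like} becomes $1$. The only difference is that the paper simply cites this duality fact, whereas you supply its standard proof (weak compactness of the unit ball giving norm attainment for $\tfrac{f+g}{2}$, plus the extreme-point argument forcing $f=g$ at a smooth point), and that argument is sound.
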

\begin{proof}
Since $X$ is a reflexive smooth space, $X^*$ is rotund.
Therefore, $\mathcal{R}(X^*)=0$. Now, utilizing Theorem~\ref{thm:Cauchy-Sw-inequality}, we deduce the desired result.
\end{proof}
\begin{rem}
  As the above corollary shows, in the inequality \eqref{eqn:CS-inequality-like},
  the constant $R=1+2\mathcal{R}(X^*)$ is the best for reflexive smooth spaces.
  If we continue \eqref{equality-pi-rho-int-xz} in a slightly different direction,
  we reach the following
  \begin{align*}
    \pi |\rho_{_{\infty}} (x,y)|
      & = \int_{0}^{2\pi} \cos \theta \rho_{_{+}}(x,e^{i\theta}z)d\theta\\
      & = \Bigl|\int_{0}^{2\pi} \cos \theta \rho_{_{+}}(x,e^{i\theta}z)d\theta\Bigr| \\
      & \leq \int_{0}^{2\pi} |\cos \theta \rho_{_{+}}(x,e^{i\theta}z)|d\theta \\
      & \leq \Bigl(\int_{0}^{2\pi}|\cos \theta| d\theta\Bigr) \|x\| \|z\|
        = 4 \|x\| \|y\|.
  \end{align*}

  Therefore, $|\rho_\infty (x,y)| \leq \frac4\pi \|x\| \|y\|$. This inequality
  is a good alternative to \eqref{eqn:CS-inequality-like}, in some normed spaces.
  For instance, take $X=\ell^1$, as in Example \ref{exa:rho-infty-in-ell-1}.
  Then $X^*=\ell^\infty$ and $\mathcal{R}(X^*)=2$.
  In this case, $\frac4\pi < 1+2\mathcal{R}(X^*)$.

  Indeed, the Cauchy--Schwartz inequality holds in $\ell^1$; for $x=(x_k)$ and $y=(y_k)$ in $\ell^1$,
  by \eqref{eqn:rho-infty-in-ell-1}, we have
  \begin{equation}\label{eqn:CS-inequality-in-ell-1}
    |\rho_{_{\infty}}(x,y)|
    = {\|x\|}_1 \Bigl|\sum_{x_k\neq0} \frac{x_k \bar y_k}{|x_k|}\Bigr|
    \leq {\|x\|}_1 \sum_{x_k\neq0} |\bar y_k| \leq {\|x\|}_1 {\|y\|}_1.
  \end{equation}

  One may conjecture that the Cauchy--Schwartz inequality holds in every complex normed space;
  we do not currently have strong evidence supporting it.
  See also Theorem \ref{thm:CS-inequality-iff} below.
\end{rem}
We finish this section with the following result.
\begin{thm}\label{prop:two-norms-are-equiv-iff}
Let $X$ be a complex space endowed with two norms ${\|\!\cdot\!\|}_1$ and ${\|\!\cdot\!\|}_2$.
Then the following conditions are equivalent.
  \begin{enumerate}[\upshape(i)]
    \item \label{item:norms-are-equivalent}
       The norms ${\|\!\cdot\!\|}_1$ and ${\|\!\cdot\!\|}_2$ are equivalent.
    \item \label{item:exists-constant-c}
       There exists a positive constant $c$ such that, for all $x,y\in X$,
    \begin{equation*}
      |\rho_{_{\infty,1}}(x,y)-\rho_{_{{\infty,2}}}(x,y)|
       \leq c \min\set{{\|x\|}_1{\|y\|}_1, {\|x\|}_2{\|y\|}_2},
    \end{equation*}
    where $\rho_{_{{\infty,k}}}$ is the functional $\rho_{_{\infty}}$
    with respect to ${\|\!\cdot\!\|}_k$, for $k=1,2$.
  \end{enumerate}
\end{thm}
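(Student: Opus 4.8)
The plan is to establish the two implications separately, built on the single identity $\rho_{_{\infty,k}}(x,x)=\|x\|_k^2$ for $k=1,2$; this is immediate from Proposition~\ref{prop:properties-of-rho-n}(i) upon passing to the limit in Definition~\ref{dfn:rho-infty}, since $\rho_{_{n}}(x,x)=\|x\|^2$ for every $n>2$. This identity carries the direction (ii)$\,\Rightarrow\,$(i) almost by itself, whereas the direction (i)$\,\Rightarrow\,$(ii) rests on the universal bound $|\rho_{_{\infty}}(x,y)|\le\frac4\pi\|x\|\,\|y\|$ recorded in the Remark following Corollary~\ref{cor:Cauchy-Sw-inequality-smooth-sp} (I prefer this to the estimate of Theorem~\ref{thm:Cauchy-Sw-inequality}, whose constant depends on the dual geometry of each norm).

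For (ii)$\,\Rightarrow\,$(i) I would specialize the hypothesis to $y=x$. Using $\rho_{_{\infty,k}}(x,x)=\|x\|_k^2$, the assumed inequality collapses to
\[
  \bigl|\,\|x\|_1^2-\|x\|_2^2\,\bigr|\le c\,\min\set{\|x\|_1^2,\|x\|_2^2}\qquad(x\in X).
\]
Bounding the minimum above by $\|x\|_2^2$ gives $\|x\|_1^2\le(1+c)\|x\|_2^2$, and bounding it instead by $\|x\|_1^2$ gives the symmetric inequality $\|x\|_2^2\le(1+c)\|x\|_1^2$. Taking square roots yields the two-sided comparison $(1+c)^{-1/2}\|x\|_2\le\|x\|_1\le(1+c)^{1/2}\|x\|_2$, which is precisely the equivalence of $\|\!\cdot\!\|_1$ and $\|\!\cdot\!\|_2$.

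For (i)$\,\Rightarrow\,$(ii), fix $M\ge1$ with $M^{-1}\|x\|_1\le\|x\|_2\le M\|x\|_1$ for all $x\in X$, and abbreviate $A=\|x\|_1\|y\|_1$ and $B=\|x\|_2\|y\|_2$. Norm equivalence gives $M^{-2}A\le B\le M^2A$, so the smaller of $A,B$ is at least $M^{-2}$ times the larger, whence $A+B\le 2M^2\min\set{A,B}$. Combining the triangle inequality with $|\rho_{_{\infty,k}}(x,y)|\le\frac4\pi\|x\|_k\|y\|_k$ then yields
\[
  |\rho_{_{\infty,1}}(x,y)-\rho_{_{\infty,2}}(x,y)|
    \le\frac4\pi(A+B)\le\frac{8M^2}{\pi}\min\set{A,B},
\]
so that $c=8M^2/\pi$ witnesses (ii). The argument is essentially routine; the only place that calls for a moment's care is this last step, where the two-sided comparability of $A$ and $B$ must be converted into a clean domination of $A+B$ by a multiple of $\min\set{A,B}$, so as to match the form of the minimum appearing in the statement.
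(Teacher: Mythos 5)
Your proposal is correct, and half of it coincides with the paper's own argument: for (ii)$\Rightarrow$(i) the paper does exactly what you do, setting $y=x$, using $\rho_{_{\infty,k}}(x,x)=\|x\|_k^2$, and extracting the two-sided bound $\frac{1}{\sqrt{1+c}}\|x\|_1\le\|x\|_2\le\sqrt{1+c}\,\|x\|_1$. The difference is in (i)$\Rightarrow$(ii). The paper bounds each $\rho_{_{\infty,k}}$ via Theorem~\ref{thm:Cauchy-Sw-inequality}, i.e.\ by $R\|x\|_k\|y\|_k$ with $R=1+2\mathcal{R}(X^*)$, and then produces the two estimates $R(1+M^2)\|x\|_1\|y\|_1$ and $R(1+\frac{1}{m^2})\|x\|_2\|y\|_2$ separately, taking $c=R\bigl(1+\max\{M^2,\frac{1}{m^2}\}\bigr)$; you instead invoke the universal bound $|\rho_{_{\infty}}(x,y)|\le\frac4\pi\|x\|\,\|y\|$ from the remark following Corollary~\ref{cor:Cauchy-Sw-inequality-smooth-sp} and then convert $A+B$ into $2M^2\min\{A,B\}$ in one clean step. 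Your choice is arguably the better one: the constant $\frac4\pi$ is absolute, whereas the paper's $R$ depends on the geometry of the dual, and since the two norms induce \emph{different} dual norms on $X^*$, the paper's single symbol $R=1+2\mathcal{R}(X^*)$ silently conflates two possibly different quantities (one should really take the larger of the two, or observe that $\mathcal{R}(X^*)\le 2$ always); your route sidesteps this issue entirely and yields the explicit constant $c=\frac{8M^2}{\pi}$. The only thing the paper's approach buys in exchange is a smaller constant in spaces where $\mathcal{R}(X^*)$ is small (e.g.\ $R=1$ for reflexive smooth spaces, versus your $\frac4\pi>1$), but for the purpose of this equivalence the size of $c$ is irrelevant.
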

\begin{proof}
In the following, we let $R=1+2\mathcal{R}(X^*)$.

Suppose that \eqref{item:norms-are-equivalent} holds. Then, there exist positive numbers
$m, M$ such that
$m{\|z\|}_1\leq {\|z\|}_2  \leq M{\|z\|}_1$ for $z\in X$.
Theorem \ref{thm:Cauchy-Sw-inequality} implies that, for all $x,y\in X$,
    \begin{equation*}
      |\rho_{_{{\infty,1}}}(x, y)| \leq R{\|x\|}_1 {\|y\|}_1,
    \end{equation*}
and
\begin{equation*}
      |\rho_{_{{\infty,2}}}(x, y)| \leq R{\|x\|}_2 {\|y\|}_2.
    \end{equation*}
Hence,
\begin{align*}
|\rho_{_{{\infty,1}}}(x, y) - \rho_{_{{\infty,2}}}(x, y)|\leq R\bigl({\|x\|}_1{\|y\|}_1 + {\|x\|}_2{\|y\|}_2\bigr),
\end{align*}
and so
\begin{align*}
|\rho_{_{{\infty,1}}}(x, y) - \rho_{_{{\infty,2}}}(x, y)|\leq R(1+M^2){\|x\|}_1{\|y\|}_1.
\end{align*}
Similarly,
    \begin{equation*}
      |\rho_{_{{\infty,1}}}(x, y) - \rho_{_{{\infty,2}}}(x, y)|
        \leq R\bigl(1+\frac{1}{m^2}\bigr){\|x\|}_2 {\|y\|}_2.
    \end{equation*}
Put $c= R\bigl(1+\max\set{M^2,\frac{1}{m^2}}\bigr)$. Therefore, we obtain
    \begin{equation*}
      |\rho_{_{{\infty,1}}}(x, y) - \rho_{_{{\infty,2}}}(x, y)|
        \leq c\min\set{{\|x\|}_1{\|y\|}_1, {\|x\|}_2{\|y\|}_2}.
    \end{equation*}

Conversely, assume that \eqref{item:exists-constant-c} holds for some $c>0$.
Then, for every $x\in X$, we have
\begin{equation*}
      |\rho_{_{{\infty,1}}}(x, x) - \rho_{_{{\infty,2}}}(x, x)|
        \leq c\min\set{{\|x\|}^2_1, {\|x\|}^2_2}.
    \end{equation*}
Therefore,
    \begin{equation*}
      \bigl|{\|x\|}^2_1 - {\|x\|}^2_2\bigr| \leq c{\|x\|}^2_1, \quad
      \bigl|{\|x\|}^2_1 - {\|x\|}^2_2\bigr| \leq c{\|x\|}^2_2,
    \end{equation*}
and thus,
    \begin{equation*}
      {\|x\|}_2 \leq \sqrt{1+c} {\|x\|}_1, \quad
      {\|x\|}_1 \leq \sqrt{1+c} {\|x\|}_2.
    \end{equation*}
Hence,
    \begin{equation*}
      \frac1{\sqrt{1+c}}{\|x\|}_1\leq {\|x\|}_2  \leq \sqrt{1+c}{\|x\|}_1 \quad (x\in X),
    \end{equation*}
meaning that the two norms are equivalent.
\end{proof}
\section{$\rho_{_{\infty}}$-orthogonality}
We start this section with an orthogonality relation in $X$ based on the mapping $\rho_{_{\infty}}$ as follows.
\begin{dfn}
Let $(X, \|\cdot\|)$ be a complex normed space. For elements $x, y\in X$, define $x\perp_{\rho_{_{\infty}}} y$
if $\rho_{_{\infty}}(x,y)=0$.
\end{dfn}
We will need the following lemma in the sequel.

\begin{lem}\label{prop:exists-a-such-that-rho-infty(x,ax+y)=0}
  For every $x,y\in X$, there is $\alpha\in \C$ such that
  $x \perp_{\rho_{_{\infty}}} (\alpha x + y)$.
\end{lem}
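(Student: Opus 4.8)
The plan is to convert the assertion into a single scalar equation by using the affine behaviour of $\rho_{_{\infty}}$ in its second argument. Indeed, Proposition \ref{prop:rho-infty(ax,ax+y)}(ii) gives, for every $\alpha\in\C$,
\begin{equation*}
  \rho_{_{\infty}}(x,\alpha x + y) = \bar\alpha\,\|x\|^2 + \rho_{_{\infty}}(x,y).
\end{equation*}
Hence $x\perp_{\rho_{_{\infty}}}(\alpha x + y)$ holds exactly when $\bar\alpha\,\|x\|^2 = -\rho_{_{\infty}}(x,y)$, and the entire task reduces to solving this one equation for $\alpha$.

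First I would dispose of the degenerate case $x=0$. Here $\|x\|=0$, and since the difference quotient defining $\rho_{_{+}}$ carries the factor $\|x\|$, we have $\rho_{_{+}}(0,\cdot)\equiv0$, so that $\rho_{_{\infty}}(0,y)=0$; consequently the displayed identity forces $\rho_{_{\infty}}(0,\alpha\cdot 0 + y)=0$ for \emph{every} $\alpha\in\C$, and in particular $\alpha=0$ works. In the main case $x\neq0$ we have $\|x\|^2>0$, so the equation above can be solved explicitly: taking
\begin{equation*}
  \alpha = -\,\frac{\overline{\rho_{_{\infty}}(x,y)}}{\|x\|^2}
\end{equation*}
yields $\bar\alpha\,\|x\|^2 = -\rho_{_{\infty}}(x,y)$, whence $\rho_{_{\infty}}(x,\alpha x + y)=0$, as required.

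There is no genuinely hard step here: once Proposition \ref{prop:rho-infty(ax,ax+y)}(ii) is available the conclusion is immediate, and the only point requiring a moment's care is isolating the case $x=0$, where the division by $\|x\|^2$ used in the explicit formula is not permitted and must be handled separately.
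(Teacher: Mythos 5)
Your proof is correct and follows essentially the same route as the paper: apply Proposition \ref{prop:rho-infty(ax,ax+y)}(ii) and solve $\bar\alpha\|x\|^2=-\rho_{_{\infty}}(x,y)$ by taking $\alpha=-\overline{\rho_{_{\infty}}(x,y)}/\|x\|^2$. The only difference is that you spell out the degenerate case $x=0$ (correctly, via $\rho_{_{+}}(0,\cdot)\equiv 0$), which the paper simply dismisses with ``without loss of generality, $x\neq 0$.''
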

\begin{proof}
  Assume, without loss of generality, that $x\neq 0$.
  Put $\alpha = -\frac{\overline{\rho_{_{\infty}}(x,y)}}{\|x\|^2}$.
  By Proposition \ref{prop:rho-infty(ax,ax+y)}~(\ref{item:rho-infty(x,alpha x+y)}),
  we have $\rho_{_{\infty}}(x,\alpha x+y)=0$. Thus $x \perp_{\rho_{_{\infty}}} (\alpha x + y)$.
\end{proof}
In a complex normed space equipped with various types of orthogonality, it is always
of some interest to consider relations between any two of the given orthogonalities, or to
characterize the classes of spaces in which one type of orthogonality implies the other.

\begin{thm}\label{thm:perp-infinity-and-perp-s}
 Let $(X, \|\!\cdot\!\|)$ be a complex normed space and let $[\cdot, \cdot ]$ be a semi-inner
 product in $X$. The following conditions are equivalent.
 \begin{enumerate}[\upshape(i)]
   \item \label{item:perp-infinity=perp-s}
     $\perp_{\rho_{_{\infty}}} = \perp_s$,
   \item \label{item:perp-infinity-subset-perp-s}
     $\perp_{\rho_{_{\infty}}} \subseteq \perp_s$,
     \item \label{item:perp-infinity-supset-perp-s}
     $\perp_{\rho_{_{\infty}}} \supseteq \perp_s$,
   \item \label{item:rho-infinity=ip-s}
     $\rho_{_{\infty}}(x,y)= \overline{[y,x]}$, for all $x,y\in X$.
 \end{enumerate}
\end{thm}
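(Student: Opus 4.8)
The plan is to establish the implications (iv) $\Rightarrow$ (i), then (i) $\Rightarrow$ (ii) and (i) $\Rightarrow$ (iii), and finally (ii) $\Rightarrow$ (iv) and (iii) $\Rightarrow$ (iv), which together close the chain of equivalences. The implication (iv) $\Rightarrow$ (i) is immediate: if $\rho_{_{\infty}}(x,y)=\overline{[y,x]}$ for all $x,y$, then $\rho_{_{\infty}}(x,y)=0$ exactly when $[y,x]=0$, i.e. $x\perp_{\rho_{_{\infty}}}y$ exactly when $x\perp_s y$. The implications (i) $\Rightarrow$ (ii) and (i) $\Rightarrow$ (iii) are trivial, an equality of relations yielding both inclusions. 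Thus the real content of the theorem is that each of the one-sided inclusions (ii) and (iii) is already strong enough to force the pointwise identity (iv).

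For (ii) $\Rightarrow$ (iv), I would fix $x,y\in X$; the case $x=0$ is trivial, since then both $\rho_{_{\infty}}(x,y)$ and $[y,x]$ vanish (by Proposition \ref{prop:rho-infty(ax,ax+y)}\,(\ref{item:rho-infty(alpha x,y)}) and (sip2), respectively). For $x\neq0$, I would use Lemma \ref{prop:exists-a-such-that-rho-infty(x,ax+y)=0} in the explicit form supplied by its proof: with $\alpha=-\overline{\rho_{_{\infty}}(x,y)}/\|x\|^2$ one has $x\perp_{\rho_{_{\infty}}}(\alpha x+y)$. Invoking hypothesis (ii), this upgrades to $x\perp_s(\alpha x+y)$, that is, $[\alpha x+y,x]=0$. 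Expanding by (sip1) and (sip3) gives $\alpha\|x\|^2+[y,x]=0$, so $[y,x]=-\alpha\|x\|^2=\overline{\rho_{_{\infty}}(x,y)}$, and taking conjugates yields $\rho_{_{\infty}}(x,y)=\overline{[y,x]}$, which is (iv).

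The implication (iii) $\Rightarrow$ (iv) is the mirror image, run from the semi-inner product side. Again the case $x=0$ is trivial, so I assume $x\neq0$ and set $\alpha=-[y,x]/\|x\|^2$. Then (sip1) and (sip3) give $[\alpha x+y,x]=\alpha\|x\|^2+[y,x]=0$, i.e. $x\perp_s(\alpha x+y)$. Hypothesis (iii) promotes this to $x\perp_{\rho_{_{\infty}}}(\alpha x+y)$, i.e. $\rho_{_{\infty}}(x,\alpha x+y)=0$, and Proposition \ref{prop:rho-infty(ax,ax+y)}\,(\ref{item:rho-infty(x,alpha x+y)}) rewrites this as $\bar\alpha\|x\|^2+\rho_{_{\infty}}(x,y)=0$. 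Hence $\rho_{_{\infty}}(x,y)=-\bar\alpha\|x\|^2=\overline{[y,x]}$, again establishing (iv) and closing the cycle.

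I do not expect a genuine obstacle here: the argument simply pairs the quasi-additivity of $\rho_{_{\infty}}$ in its second slot (Proposition \ref{prop:rho-infty(ax,ax+y)}\,(\ref{item:rho-infty(x,alpha x+y)})) against the additivity of the semi-inner product in its first slot (sip1), and reads off the correct scalar $\alpha$ in each direction. The one point demanding care is the bookkeeping of complex conjugates---the semi-inner product is conjugate-linear in its second argument, whereas $\rho_{_{\infty}}$ contributes $\bar\alpha$ rather than $\alpha$---but the two normalizing choices $\alpha=-\overline{\rho_{_{\infty}}(x,y)}/\|x\|^2$ and $\alpha=-[y,x]/\|x\|^2$ are precisely what make the conjugations cancel, so that (iv) emerges cleanly in both directions.
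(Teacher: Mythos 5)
Your proposal is correct and follows essentially the same route as the paper: the trivial implications (iv)~$\Rightarrow$~(i)~$\Rightarrow$~(ii), (iii), then using Lemma~\ref{prop:exists-a-such-that-rho-infty(x,ax+y)=0} with $\alpha=-\overline{\rho_{_{\infty}}(x,y)}/\|x\|^2$ to get (ii)~$\Rightarrow$~(iv), and the mirror argument with $\alpha=-[y,x]/\|x\|^2$ via (sip1)--(sip3) and Proposition~\ref{prop:rho-infty(ax,ax+y)} for (iii)~$\Rightarrow$~(iv). The only difference is that you spell out the $x=0$ case and the details of (iii)~$\Rightarrow$~(iv), which the paper leaves as ``proved similarly''; both are handled correctly.
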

\begin{proof}
 The implications \eqref{item:rho-infinity=ip-s} $\Rightarrow$ \eqref{item:perp-infinity=perp-s}
 $\Rightarrow$ \eqref{item:perp-infinity-subset-perp-s} and
 \eqref{item:perp-infinity=perp-s}  $\Rightarrow$ \eqref{item:perp-infinity-supset-perp-s}  trivially hold.
We prove
 \eqref{item:perp-infinity-subset-perp-s} $\Rightarrow$ \eqref{item:rho-infinity=ip-s}.
Let $ x,y\in X$. Put $z:=-\frac{\overline{\rho_{_{\infty}}(x,y)}}{\|x\|^2}x+y$. Then,
Lemma \ref{prop:exists-a-such-that-rho-infty(x,ax+y)=0} yields $x\perp_{\rho_{_{\infty}}} z$.
By \eqref{item:perp-infinity-subset-perp-s}, we obtain $x\perp_{s} z$. Whence,
\begin{align*}
0 = [z,x]= -\frac{\overline{\rho_{_{\infty}}}(x,y)}{\|x\|^2}\|x\|^2 + [y,x],
\end{align*}
and thus $\rho_{_{\infty}}(x,y) = \overline{[y,x]}$.
The implication \eqref{item:perp-infinity-supset-perp-s} $\Rightarrow$ \eqref{item:rho-infinity=ip-s}
is proved similarly (using (sip1-3) and Proposition \ref{prop:rho-infty(ax,ax+y)}).
\end{proof}
The following example shows that the relations
$\perp_{\rho_{_{+}}}$ and $\perp_{\rho_{_{\infty}}}$ are not comparable in general.
\begin{example}
  Take $X=\ell^1$ as in Example \ref{exa:rho-infty-in-ell-1}. Let
  \begin{equation*}
   \left\{\!\!\!
    \begin{array}{ll}
      x=(1,0,0,\dotsc),\\
      y=(i,0,0,\dotsc),
    \end{array}
  \right. \qquad
  \left\{\!\!\!
    \begin{array}{ll}
      u=(1,1,0,0,\dotsc),\\
      v=(1,-1,0,0,\dotsc).
    \end{array}\right.
  \end{equation*}
Then, using \eqref{eqn:rho+in-ell-1}, we get
$\rho_{_{+}}(x,y) = 0$ and $\rho_{_{+}}(u,v) = 1$.
While, by \eqref{eqn:rho-infty-in-ell-1}, we have
$\rho_{_{\infty}}(x,y) = -i$ and $\rho_{_{\infty}}(u,v) = 0$.
We conclude that $\perp_{\rho_{_{+}}} \not\subset \perp_{\rho_{_{\infty}}}$
and $\perp_{\rho_{_{\infty}}} \not\subset \perp_{\rho_{_{+}}}$.
\end{example}
The above example motivates the next result.
\begin{prop}
Let $X$ be a complex normed space. Then the
inclusion $\perp_{\rho_{_{+}}}\subseteq\perp_{\rho_{_{\infty}}}$ does not hold.
\end{prop}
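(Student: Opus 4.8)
The plan is to disprove the inclusion by exhibiting a single complex normed space together with a pair of vectors for which $\rho_{_{+}}$-orthogonality holds but $\rho_{_{\infty}}$-orthogonality fails. The preceding example already supplies such a pair, so I would simply take $X=\ell^1$ with $x=(1,0,0,\dotsc)$ and $y=(i,0,0,\dotsc)$ as in Example~\ref{exa:rho-infty-in-ell-1}.

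First I would evaluate $\rho_{_{+}}(x,y)$ from the closed form \eqref{eqn:rho+in-ell-1} with $\theta=0$. Since every coordinate beyond the first vanishes, only $k=1$ contributes, and because $\re(y_1/x_1)=\re(i)=0$ the whole expression is zero; hence $x\perp_{\rho_{_{+}}}y$. Next I would compute $\rho_{_{\infty}}(x,y)$ from \eqref{eqn:rho-infty-in-ell-1}: again only $k=1$ survives, giving $\rho_{_{\infty}}(x,y)={\|x\|}_1\,\frac{x_1\bar y_1}{|x_1|}=\overline{i}=-i\neq0$, so that $x\not\perp_{\rho_{_{\infty}}}y$. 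These two facts together contradict the implication $x\perp_{\rho_{_{+}}}y\Rightarrow x\perp_{\rho_{_{\infty}}}y$, which is exactly what the proposition denies.

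No real obstacle arises, since the two explicit formulas \eqref{eqn:rho+in-ell-1} and \eqref{eqn:rho-infty-in-ell-1} reduce the argument to arithmetic. The point worth isolating is the mechanism behind the counterexample: on a single coordinate $\rho_{_{+}}(x,y)$ only registers $\re(y_k/x_k)$, whereas $\rho_{_{\infty}}(x,y)$ recovers the full complex quantity $x_k\bar y_k/|x_k|$. Choosing $y$ so that $y_1/x_1$ is purely imaginary therefore annihilates $\rho_{_{+}}$ while leaving $\rho_{_{\infty}}$ nonzero, and any such choice would work equally well.
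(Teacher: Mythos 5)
Your proof establishes a strictly weaker statement than the proposition. The proposition is universal: ``Let $X$ be a complex normed space'' introduces an \emph{arbitrary} (nontrivial) space, and the claim is that the inclusion $\perp_{\rho_{_{+}}}\subseteq\perp_{\rho_{_{\infty}}}$ fails in \emph{every} such $X$. What you prove is only that it fails in the particular space $\ell^1$ --- but that is precisely the content of the example that immediately precedes the proposition in the paper (built on Example~\ref{exa:rho-infty-in-ell-1}), and the paper introduces the proposition with ``The above example motivates the next result'', signalling that it is meant as the generalization of that example, not a restatement of it. A counterexample in $\ell^1$ says nothing about, say, a smooth space or an abstract $X$ whose norm you cannot compute explicitly, so the assertion actually being made remains unproved. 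Your computations in $\ell^1$ are correct; the gap is one of quantification, not arithmetic.

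The paper's proof is a structural argument valid in any $X\neq\{0\}$, run by contradiction: assume $\perp_{\rho_{_{+}}}\subseteq\perp_{\rho_{_{\infty}}}$ holds in $X$. For $x\neq0$ and arbitrary $y$, the real scalar $\alpha=-\rho_{_{+}}(x,y)/\|x\|^2$ satisfies $x\perp_{\rho_{_{+}}}(\alpha x+y)$ by (nd2), exactly as in the proof of Theorem~\ref{thm:perp-infinity-and-perp-s}; the assumed inclusion together with Proposition~\ref{prop:rho-infty(ax,ax+y)}(ii) then yields $0=\alpha\|x\|^2+\rho_{_{\infty}}(x,y)$, i.e.\ $\rho_{_{\infty}}(x,y)=\rho_{_{+}}(x,y)$ for all $x,y$, so that $\rho_{_{\infty}}$ would have to be real-valued. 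But Proposition~\ref{prop:rho-infty(ax,ax+y)}(i) gives $\rho_{_{\infty}}(x,ix)=-i\,\|x\|^2\notin\R$ for $x\neq0$, a contradiction. Note that your closing observation --- on each coordinate $\rho_{_{+}}$ records only $\re(y_k/x_k)$ while $\rho_{_{\infty}}$ retains the full complex value --- is exactly the right heuristic: it is the tension between the real-valuedness of $\rho_{_{+}}$ and the conjugate-homogeneity $\rho_{_{\infty}}(x,e^{i\theta}y)=e^{-i\theta}\rho_{_{\infty}}(x,y)$. To prove the proposition, however, that tension must be exploited abstractly, as above, rather than verified in one concrete example.
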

\begin{proof}
Suppose, for a contradiction, that the inclusion $\perp_{\rho_{_{+}}}\subseteq\perp_{\rho_{_{\infty}}}$
is true. In a similar way, as in the proof of Theorem \ref{thm:perp-infinity-and-perp-s}, we
obtain $\rho_{_{+}}(x,y)=\overline{\rho_{_{\infty}}(x,y)}$ for all $x,y\in X$.
Fix $x,y\in X$ such
that $\rho_{_{+}}(x,y)\neq 0$. By Proposition \ref{prop:rho-infty(ax,ax+y)}(i), we obtain
$\rho_{_{+}}(x,e^{i\theta}y) = e^{i\theta}\overline{\rho_{_{\infty}}(x,y)}$,
for all $\theta\in[0,2\pi)$. It is clear that
$e^{i\theta_o}\overline{\rho_{_{\infty}}(x, y)}\in \C\setminus\R$, for some $\theta_o\in[0,2\pi)$.
On the other hand, $\rho_{_{+}}(x,e^{i\theta_o}y)\in \R$ and we have our desired contradiction.
\end{proof}

The following example shows that, in general, $\perp_{B} \not\subset \perp_{\rho_{_{\infty}}}$.
\begin{example}\label{exa:+perp(B)-perp(rho-infity)}
  Take $X=\ell^1$, as in Example \ref{exa:rho-infty-in-ell-1}. Let
  \begin{equation*}
    x=(0,1,0,0,\dotsc) \quad \text{and} \quad y=(2,1,0,0,\dotsc).
  \end{equation*}
For every $\xi\in\C$ we have
\begin{align*}
{\|x+\xi y\|}_1 =2|\xi| + |1+\xi| \geq 1 = {\|x\|}_1.
\end{align*}
This shows that $x\perp_{B} y$. However, by \eqref{eqn:rho-infty-in-ell-1}, we have
$\rho_{_{\infty}}(x,y) =1$,
and thus $x\not\perp_{\rho_{_{\infty}}}y$.
\end{example}
It is natural to ask whether the inclusion $\perp_{\rho_{_{\infty}}}\subseteq \perp_{B}$ hold.
The next theorem gives a partial answer (it is motivated by Theorem~\ref{thm:Cauchy-Sw-inequality}).

\begin{thm}\label{thm:CS-inequality-iff}
Let $(X, \|\!\cdot\!\|)$ be a complex normed space. Then the following conditions are equivalent.
  \begin{enumerate}[\upshape(i)]
  \item \label{inequality-constant-one}
       $\big|\rho_{_{\infty}}(x,y)\big|\leq\|x\|\|y\|$ for all $x, y\in X$,
  \item \label{rho-inf-ort-subset-b-ort}
       $\perp_{\rho_{_{\infty}}}\subseteq \perp_{B}$.
  \end{enumerate}
\end{thm}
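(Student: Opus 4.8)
The plan is to establish the two implications \eqref{inequality-constant-one} $\Rightarrow$ \eqref{rho-inf-ort-subset-b-ort} and \eqref{rho-inf-ort-subset-b-ort} $\Rightarrow$ \eqref{inequality-constant-one} separately, relying throughout on the homogeneity and near-additivity of $\rho_{_{\infty}}$ recorded in Proposition~\ref{prop:rho-infty(ax,ax+y)}, together with Lemma~\ref{prop:exists-a-such-that-rho-infty(x,ax+y)=0} and James's functional characterization of $\perp_B$. In both directions the case $x=0$ is trivial (one checks $\rho_{_{\infty}}(0,y)=0$ from homogeneity and $0\perp_B y$ always), so I would assume $x\neq 0$.

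For \eqref{inequality-constant-one} $\Rightarrow$ \eqref{rho-inf-ort-subset-b-ort}, I would start from $x\perp_{\rho_{_{\infty}}}y$, i.e. $\rho_{_{\infty}}(x,y)=0$, and compute $\rho_{_{\infty}}(x,x+\xi y)$ for arbitrary $\xi\in\C$. By Proposition~\ref{prop:rho-infty(ax,ax+y)}\eqref{item:rho-infty(x,alpha x+y)} (with $\alpha=1$) and \eqref{item:rho-infty(alpha x,y)}, this equals $\|x\|^2+\bar\xi\,\rho_{_{\infty}}(x,y)=\|x\|^2$. Feeding this into the assumed Cauchy--Schwarz inequality gives $\|x\|^2=|\rho_{_{\infty}}(x,x+\xi y)|\leq\|x\|\,\|x+\xi y\|$, hence $\|x\|\leq\|x+\xi y\|$ for every $\xi$, which is exactly $x\perp_B y$.

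For the converse \eqref{rho-inf-ort-subset-b-ort} $\Rightarrow$ \eqref{inequality-constant-one}, I would fix $x,y\in X$ and set $\alpha=-\overline{\rho_{_{\infty}}(x,y)}/\|x\|^2$, so that Lemma~\ref{prop:exists-a-such-that-rho-infty(x,ax+y)=0} yields $x\perp_{\rho_{_{\infty}}}(\alpha x+y)$; the hypothesis then forces $x\perp_B(\alpha x+y)$. Applying James's theorem produces $f\in J(x)$ with $f(\alpha x+y)=0$, and since $f(x)=\|x\|$ this gives $f(y)=-\alpha\|x\|$. Because $f\in S_{X^*}$, we get $|\alpha|\,\|x\|=|f(y)|\leq\|y\|$. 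On the other hand, unwinding the definition of $\alpha$ shows $|\rho_{_{\infty}}(x,y)|=|\alpha|\,\|x\|^2$, and combining the two relations delivers $|\rho_{_{\infty}}(x,y)|\leq\|x\|\,\|y\|$. The main obstacle is this last direction: the content lies in recognizing that the single scalar $\alpha$ furnished by the lemma simultaneously encodes the value $\rho_{_{\infty}}(x,y)$ and, via the supporting functional supplied by James's theorem, the quantity $f(y)$, so that the Birkhoff--James orthogonality can be converted into the desired norm bound.
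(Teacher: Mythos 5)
Your proposal is correct and follows essentially the same route as the paper: the forward direction expands $\rho_{_{\infty}}(x,x+\xi y)=\|x\|^2$ via Proposition~\ref{prop:rho-infty(ax,ax+y)} and applies the assumed inequality, while the converse uses Lemma~\ref{prop:exists-a-such-that-rho-infty(x,ax+y)=0} with $\alpha=-\overline{\rho_{_{\infty}}(x,y)}/\|x\|^2$ and James's characterization to extract $f\in J(x)$ with $|f(y)|=|\rho_{_{\infty}}(x,y)|/\|x\|\leq\|y\|$. The only cosmetic difference is that you dispose of the case $x=0$ explicitly, which the paper leaves implicit.
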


\begin{proof}
   To prove (\ref{inequality-constant-one})$\Rightarrow$(\ref{rho-inf-ort-subset-b-ort}), fix
   two arbitrary elements $x,y\in X$ with $x\perp_{\rho_{_{\infty}}}y$. Let $\xi\in\C$. Then,
   from Proposition \ref{prop:rho-infty(ax,ax+y)}~(\ref{inequality-constant-one})
   and (\ref{item:rho-infty(x,alpha x+y)}), it follows that
\begin{align*}
\|x\|^2 = |\rho_{_{\infty}}(x,x+\xi y)| \leq \|x\| \|x+\xi y\|.
\end{align*}
If $x\neq 0$, we get $\|x+\xi y\|\geq \|x\|$, which means that $x\perp_{B} y$.

Now, suppose that (\ref{rho-inf-ort-subset-b-ort}) holds. Fix $x,y\in X$. We may assume that $x\neq 0$.
By Lemma \ref{prop:exists-a-such-that-rho-infty(x,ax+y)=0} we have $x\perp_{\rho_\infty} -\frac{\overline{\rho_{_{\infty}}(x,y)}}{\|x\|^2}x+y$.
Applying (\ref{rho-inf-ort-subset-b-ort}) we get $x\perp_{B} -\frac{\overline{\rho_{_{\infty}}(x,y)}}{\|x\|^2}x+y$.
Now, the classical James' characterization of the Birkhoff--James orthogonality says that
there is a functional $f\in J(x)$ such that $f\left(-\frac{\overline{\rho_{_{\infty}}(x,y)}}{\|x\|^2}x+y\right)=0$.
Thus
\begin{align*}
f(y)=\frac{\overline{\rho_{_{\infty}}(x,y)}}{\|x\|^2}f(x) =\frac{\overline{\rho_{_{\infty}}(x,y)}}{\|x\|}.
\end{align*}
It follows that $\big|\rho_{_{\infty}}(x,y)\big|=\big|\overline{f(y)}\|x\|\big|\leq\|y\| \|x\|$, and we are done.
\end{proof}
\begin{rem}
Take $X=\ell^1$, as in Example \ref{exa:rho-infty-in-ell-1}.
According to \eqref{eqn:CS-inequality-in-ell-1}, the inequality
$\big|\rho_{_{\infty}}(x,y)\big|\leq\|x\|\|y\|$ holds, for all $x,y\in \ell^1$.
Therefore, in $\ell^1$, we have $\perp_{\rho_{_{\infty}}}\subset \perp_{B}$.
\end{rem}
\section{A characterization of inner product spaces}
The problem of finding necessary and sufficient conditions for a complex normed
space to be an inner product space has been investigated by many mathematicians,
see e.g. \cite{Amir} and the references therein.
In this section a characterization of complex inner product spaces is presented in terms of $\rho_{_{\infty}}$.

Let us quote a result from \cite{A.S.T}.
\begin{lem}[{\cite[Theorem 1.4.5]{A.S.T}}]\label{Theorem 1.4.5(Alsina)}
Let $X$ be a real normed space of dimension greater than or equal to $2$.
The space $X$ is an inner product space if and only if each two-dimensional
subspace of $X$ is an inner product space.
\end{lem}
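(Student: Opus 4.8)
The plan is to reduce the statement to the classical Jordan--von Neumann characterization, namely that a real normed space $(X,\|\!\cdot\!\|)$ is an inner product space if and only if it satisfies the parallelogram law
\begin{equation*}
\|x+y\|^2+\|x-y\|^2=2\|x\|^2+2\|y\|^2 \qquad (x,y\in X).
\end{equation*}
The decisive feature of this identity is that it involves only two vectors at a time, so it can be verified one pair at a time inside subspaces of dimension at most two. I would treat this theorem as the sole nontrivial input and cite it as a classical result.

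The forward implication (``only if'') is immediate: if $X$ is an inner product space, then every subspace $M\subseteq X$ inherits the restriction of the inner product and is therefore itself an inner product space, and this applies in particular to every two-dimensional subspace.

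For the converse, I would fix arbitrary $x,y\in X$ and set $M=\operatorname{span}\{x,y\}$. If $\dim M=2$, then by hypothesis $M$ is an inner product space, hence the parallelogram law holds for the pair $x,y$, since both vectors lie in $M$. If $x$ and $y$ are linearly dependent so that $\dim M\le 1$, I would handle the degenerate case either by a direct one-line computation (on a one-dimensional space the norm is $\|te\|=|t|\,\|e\|$ for a fixed $e$, which makes the identity trivial) or, using $\dim X\ge 2$, by enlarging $M$ to any two-dimensional subspace containing it and invoking the hypothesis there. In all cases the parallelogram law holds for $x,y$; as $x,y$ were arbitrary, it holds throughout $X$, and Jordan--von Neumann then yields that $X$ is an inner product space.

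The only genuine obstacle is the parallelogram-law characterization itself, which is well known and which I would quote rather than reprove; everything else is routine. The sole point requiring a word of care is the collinear case $\dim\operatorname{span}\{x,y\}\le 1$, but this presents no real difficulty, since one-dimensional normed spaces are automatically inner product spaces and the assumption $\dim X\ge 2$ always leaves room to embed any pair of vectors in a two-dimensional subspace.
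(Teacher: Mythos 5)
Your proof is correct. Note first that the paper itself gives no proof of this statement: it is quoted verbatim as a known result, with the citation \cite[Theorem 1.4.5]{A.S.T}, and is used as a black box in the proof of Theorem~\ref{CIPS}. So there is no ``paper proof'' to compare against; what you have done is supply the standard argument that the cited source (and most textbooks) would use. Your reduction is sound: the Jordan--von Neumann theorem characterizes inner product spaces by the parallelogram law, an identity involving only the two vectors $x,y$ (together with $x+y$ and $x-y$, which lie in $\operatorname{span}\{x,y\}$), so its validity on $X$ follows from its validity on each subspace of dimension at most $2$. The forward direction by restriction of the inner product is immediate, and your handling of the degenerate collinear case is fine either way --- the direct computation $\|(1+\lambda)x\|^2+\|(1-\lambda)x\|^2 = 2(1+\lambda^2)\|x\|^2$ settles it, as does embedding the line into a two-dimensional subspace, which the hypothesis $\dim X\ge 2$ permits. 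The only external input you rely on is Jordan--von Neumann itself, which is legitimately quoted as classical; this is exactly the level of rigor appropriate for a lemma the paper itself declines to prove.
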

If $(X, \|\!\cdot\!\|)$ is a normed space over the complex field which, as a space over $\R$,
has an inner product $\langle\cdot,\cdot \rangle_\R$, then
\begin{align*}
\langle x,y \rangle: = \langle x, y \rangle_\R + i \langle x, iy \rangle_\R
\end{align*}
is a complex inner product for $X$ as a vector space over $\C$. Note that
\begin{align*}
2\|x\|^2 = \|(1 + i)x\|^2 = 2\|x\|^2 + 2\langle x, ix \rangle_\R,
\end{align*}
so that $\langle x, ix \rangle_\R=0$, for all $x\in X$.
\begin{thm}\label{CIPS}
In a complex normed space $(X, \|\cdot\|)$, the following are equivalent.
  \begin{enumerate}[\upshape(i)]
    \item $X$ is a complex inner product space,
    \item $\rho_{_{\infty}}(x,y) = \rho_{_{\infty}}(y,x)$, for all $x,y\in X$.
  \end{enumerate}
\end{thm}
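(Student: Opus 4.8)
The plan is to treat (ii)$\Rightarrow$(i) as the substantial implication and to dispose of (i)$\Rightarrow$(ii) immediately. For the easy direction, the Remark following Definition \ref{dfn:rho-infty} gives $\rho_{_{\infty}}(x,y)=\langle x,y\rangle$ in a complex inner product space, and the conjugate symmetry $\langle x,y\rangle=\overline{\langle y,x\rangle}$ of the inner product then yields $\rho_{_{\infty}}(x,y)=\overline{\rho_{_{\infty}}(y,x)}$, i.e. condition (ii). Thus the whole content lies in manufacturing an inner product out of the symmetry of $\rho_{_{\infty}}$, and for this I would rely on the two tools assembled just before the statement: the reduction in Lemma \ref{Theorem 1.4.5(Alsina)} to two-dimensional real subspaces, and the displayed complexification identity that builds a complex inner product from a real one.

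For (ii)$\Rightarrow$(i), I would first regard $X$ as a real normed space $X_{\R}$ and introduce the real-valued form $g(x,y):=\re\rho_{_{\infty}}(x,y)$. Proposition \ref{prop:rho-infty(ax,ax+y)}(i), read with real scalars, shows that $g$ is homogeneous of degree one in each variable, while $g(x,x)=\re\rho_{_{\infty}}(x,x)=\|x\|^2$ by Proposition \ref{prop:properties-of-rho-n}(i) in the limit; the hypothesis (ii) makes $g$ symmetric. So $g$ is a symmetric, real-homogeneous form reproducing the square of the norm on the diagonal, and the single missing property that would turn it into a genuine real inner product is additivity in each slot. This reformulation isolates the real difficulty: Proposition \ref{prop:rho-infty(ax,ax+y)}(ii) grants increments only along $x$ itself, so $\rho_{_{\infty}}$ carries no additivity a priori and it must be forced out of the symmetry assumption.

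To force additivity I would aim for the parallelogram law $\|x+y\|^2+\|x-y\|^2=2\|x\|^2+2\|y\|^2$, reducing by Lemma \ref{Theorem 1.4.5(Alsina)} to the case of a fixed two-dimensional real subspace. Since $\rho_{_{\infty}}$ is not additive one cannot simply polarize; instead I would return to the integral definition of $\rho_{_{\infty}}$, substitute the homogeneity and increment relations of Proposition \ref{prop:rho-infty(ax,ax+y)}, and use the symmetry $g(x,y)=g(y,x)$ to cancel the terms obstructing the identity, leaning on the elementary angular integrals $\int_0^{2\pi}e^{2i\theta}d\theta=0$ and $\int_0^{2\pi}\cos^2\theta\,d\theta=\pi$ already exploited in Section 2. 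I expect this to be the main obstacle, precisely because the rotation $e^{i\theta}y$ inside $\rho_{_{\infty}}$ pushes $y$ out of the chosen real subspace — it mixes $y$ with $iy$ — so the intrinsic two-dimensional data entering the parallelogram law must be disentangled from an integral that is not intrinsic to that subspace.

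Once $X_{\R}$ is known to be a real inner product space, with inner product $\langle\cdot,\cdot\rangle_{\R}$, I would conclude with the complexification identity displayed before the theorem: the equality $\|(1+i)x\|^2=2\|x\|^2$ forces $\langle x,ix\rangle_{\R}=0$, and then $\langle x,y\rangle:=\langle x,y\rangle_{\R}+i\langle x,iy\rangle_{\R}$ is a complex inner product on $X$. It induces the original norm, since $\langle x,x\rangle=\|x\|^2$, and a final consistency check — that this inner product agrees with $\rho_{_{\infty}}$ — is furnished by the Remark after Definition \ref{dfn:rho-infty}. This returns us to (i) and closes the equivalence.
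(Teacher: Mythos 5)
Your skeleton is the paper's: define $g(x,y)=\re\rho_{_{\infty}}(x,y)$, reduce to two-dimensional real subspaces via Lemma \ref{Theorem 1.4.5(Alsina)}, and complexify at the end with $\ip{x,y}=\ip{x,y}_\R+i\ip{x,iy}_\R$. But the one step you explicitly defer --- additivity of $g$ --- is the entire content of the paper's argument, and the plan you sketch for it would not go through. To ``aim for the parallelogram law'' you must expand $\|x+y\|^2=g(x+y,x+y)$ in terms of $g(x,x)$, $g(x,y)$, $g(y,y)$, which already presupposes the bilinearity you are trying to prove; and attacking the integral $\int_0^{2\pi}\cos\theta\,\rho_{_{+}}\bigl(x+y,e^{i\theta}(x+y)\bigr)\,d\theta$ head-on is hopeless, because $\rho_{_{+}}$ has no additivity (nor any other useful behaviour) in its \emph{first} argument, so the integrand never splits. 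The paper's resolution is purely algebraic and short: fix a two-dimensional real subspace $E$ and $x,y,z\in E$; if $x,y$ are linearly independent then $z=\alpha x+\beta y$ with $\alpha,\beta\in\R$, and
\begin{equation*}
\ip{x,y+z}_\R=\ip{x,\alpha x+(1+\beta)y}_\R
=\alpha\|x\|^2+(1+\beta)\ip{x,y}_\R
=\ip{x,y}_\R+\ip{x,z}_\R,
\end{equation*}
using only Proposition \ref{prop:rho-infty(ax,ax+y)}(ii) and real homogeneity (the dependent case $y=\lambda x$ is even easier). So additivity in the second slot comes for free, and the symmetry hypothesis (ii) is used exactly once, to transfer it to the first slot. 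This also shows that your worry about $e^{i\theta}y$ ``leaving the subspace'' is a red herring: nothing requires the defining integral to be intrinsic to $E$; one simply restricts the globally defined form $\re\rho_{_{\infty}}$ to $E\times E$ and invokes identities (Proposition \ref{prop:rho-infty(ax,ax+y)}) valid in all of $X$.

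A smaller point concerns your easy direction: what the Remark after Definition \ref{dfn:rho-infty} gives is $\rho_{_{\infty}}(x,y)=\ip{x,y}$, hence $\rho_{_{\infty}}(x,y)=\overline{\rho_{_{\infty}}(y,x)}$; that is \emph{conjugate} symmetry, not the literal condition (ii) $\rho_{_{\infty}}(x,y)=\rho_{_{\infty}}(y,x)$ (which, read at face value, would force $\ip{x,y}$ to be real for all $x,y$). You pass from one to the other without comment. The paper's ``Obviously'' hides the same wrinkle, and since its proof of (ii)$\Rightarrow$(i) uses only $\re\rho_{_{\infty}}(x,y)=\re\rho_{_{\infty}}(y,x)$, the equivalence is sound under the conjugate-symmetric reading; your write-up should state explicitly which identity it establishes.
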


\begin{proof}
Obviously, (i)$\Rightarrow$(ii).

  Assume that $\rho_{_{\infty}}(x,y) = \rho_{_{\infty}}(y,x)$, for all $x,y\in X$.
  Consider $X$ as a real normed space, and let $E$ be any two-dimensional subspace of $X$.
  Define a mapping $\langle\cdot,\cdot\rangle_\R:E\times E \to \R$, by
  \begin{equation}\label{eqn:real-inner-product}
    \ip{x,y}_\R= \re \rho_{_{\infty}}(x,y)
     = \frac1{\pi} \int_0^{2\pi} \cos \theta \rho_{_{+}}(x,e^{i\theta}y) d\theta, \quad (x,y\in X).
  \end{equation}

  We show that $\langle\cdot,\cdot\rangle_\R$ is an inner product in $E$.
  It is easily seen that the function is symmetric, homogeneous and nonnegative.
  We only need to verify the additivity of $\langle\cdot,\cdot\rangle_\R$ in each variable.
  By the symmetry, however, it is enough to show the additivity with respect to the second variable.
  Take $x, y, z \in E$ and consider two cases. Assume first that $x, y$ are linearly dependent,
  so that $y = \lambda x$, for some $\lambda \in\R$. Then
  \begin{align*}
    \ip{x,y+z}_\R
      & = \ip{x,\lambda x +z}_\R \\
      & = \re \rho_{_{\infty}}(x, \lambda x+z) \\
      & = \re (\rho_{_{\infty}}(x,\lambda x) + \rho_{_{\infty}}(x,z)) \\
      & = \re \rho_{_{\infty}}(x,y) + \re \rho_{_{\infty}}(x,z) \\
      & = \ip{x,y}_\R + \ip{x,z}_\R.
  \end{align*}

  Now let $x, y$ be linearly independent. Then $z = \alpha x+ \beta y$, for some $\alpha,\beta \in \R$.
  Then by Proposition \ref{prop:rho-infty(ax,ax+y)}~(\ref{item:rho-infty(x,alpha x+y)}) we have
  \begin{align*}
    \ip{x,y+z}_\R
      & =\ip{x, \alpha x + (1+\beta) y}_\R \\
      & = \ip{x,\alpha x}_\R + \ip{x,(1+\beta) y}_\R \\
      & = \ip{x,\alpha x}_\R + (1+\beta)\ip{x,y}_\R \\
      & = \ip{x,y}_\R + \ip{x,\alpha x + \beta y}_\R \\
      & = \ip{x,y}_\R + \ip{x,z}_\R.
  \end{align*}

  So, we have proved that $\ip{\cdot,\cdot}_\R$ defines in $E$ is an inner product.
  By the free choice of $E$, on account of Lemma \ref{Theorem 1.4.5(Alsina)},
  the norm in $X$ as a real space, comes from the inner product defined
  by \eqref{eqn:real-inner-product}.

  Now, the function $\ip{x,y} := \ip{x,y}_\R + i \ip{x,iy}_\R$, for all $x,y\in X$,
  defines a complex inner product which induces the norm on $X$.

  It is interesting to note that,
  \begin{align*}
    \langle x,y \rangle
      & = \langle x, y \rangle_\R + i \langle x, iy \rangle_\R \\
      & = \frac1{\pi} \int_0^{2\pi} \cos \theta \rho_{_{+}}(x,e^{i\theta}y)
           + \frac i\pi \int_0^{2\pi}\cos \theta \rho_{_{+}}(x,ie^{i\theta}y) d\theta \\
      & = \frac1{\pi} \int_0^{2\pi} \cos \theta \rho_{_{+}}(x,e^{i\theta}y)
           + \frac i{\pi} \int_0^{2\pi} \cos \theta \rho_{_{+}}\bigl(x,e^{i(\theta+\frac\pi2)}y\bigr) d\theta \\
      & = \frac1{\pi} \int_0^{2\pi} \cos \theta \rho_{_{+}}(x,e^{i\theta}y)
           + \frac i{\pi} \int_0^{2\pi} \sin \theta \rho_{_{+}}(x,e^{i \theta}y) d\theta \\
      & = \frac1{\pi} \int_0^{2\pi} (\cos \theta +i \sin \theta)\rho_{_{+}}(x,e^{i \theta}y) d\theta = \rho_{_{\infty}}(x,y).
  \end{align*}
\end{proof}
As an immediate consequence of Theorem \ref{CIPS}, we have the following result.
\begin{cor}
For a complex normed space $X$, the following are equivalent.
  \begin{enumerate}[\upshape(i)]
    \item $\rho_{_{+}}(x,y) = \rho_{_{+}}(y,x)$ for all $x,y\in X$,
    \item $\rho_{_{\infty}}(x,y) = \rho_{_{\infty}}(y,x)$ for all $x,y\in X$.
  \end{enumerate}
\end{cor}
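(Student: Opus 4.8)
The plan is to route both implications through the characterization of complex inner product spaces already supplied by Theorem~\ref{CIPS}, which states that condition (ii) holds precisely when $X$ is a complex inner product space. Consequently it suffices to prove that condition (i) is \emph{also} equivalent to $X$ being a complex inner product space; chaining the two equivalences then yields (i) $\Leftrightarrow$ (ii). So the whole problem reduces to establishing that $\rho_{_{+}}$ is symmetric if and only if the norm of $X$ comes from an inner product.

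For the implication ``$X$ is a complex inner product space $\Rightarrow$ (i)'' I would record the elementary computation that in such a space $\rho_{_{+}}(x,y)=\re\ip{x,y}$ for all $x,y$: expanding $\|x+ty\|^2=\|x\|^2+2t\,\re\ip{x,y}+t^2\|y\|^2$ for real $t$ and letting $t\to0^{+}$ gives exactly this. Since $\re\ip{x,y}=\re\overline{\ip{y,x}}=\re\ip{y,x}$, the map $\rho_{_{+}}$ is symmetric, which is (i). Combined with Theorem~\ref{CIPS}, this already disposes of the direction (ii) $\Rightarrow$ (i): apply Theorem~\ref{CIPS} to obtain that $X$ is a complex inner product space, then invoke the computation just described.

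The substantive direction is (i) $\Rightarrow$ the inner product property, and the idea is to transfer the symmetry of $\rho_{_{+}}$ to $\rho_{_{\infty}}$. Assuming $\rho_{_{+}}(x,y)=\rho_{_{+}}(y,x)$ for all $x,y$, property (nd3) furnishes the key identity
\[
  \rho_{_{+}}(x,e^{i\theta}y)=\rho_{_{+}}(e^{i\theta}y,x)=\rho_{_{+}}(y,e^{-i\theta}x).
\]
Inserting this into the real part of the integral defining $\rho_{_{\infty}}$, namely $\re\rho_{_{\infty}}(x,y)=\frac1\pi\int_0^{2\pi}\cos\theta\,\rho_{_{+}}(x,e^{i\theta}y)\,d\theta$, and performing the change of variables $\theta\mapsto-\theta$ (legitimate because the integrand is $2\pi$-periodic, so \eqref{eqn:2pi-periodic} applies and $\cos$ is even) yields $\re\rho_{_{\infty}}(x,y)=\re\rho_{_{\infty}}(y,x)$. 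This is exactly the symmetry of the real-valued form $\ip{x,y}_{\R}:=\re\rho_{_{\infty}}(x,y)$ that drives the proof of Theorem~\ref{CIPS}; rerunning that argument (additivity via the homogeneity and translation properties in Proposition~\ref{prop:rho-infty(ax,ax+y)}, together with Lemma~\ref{Theorem 1.4.5(Alsina)}) shows that the norm of $X$ comes from an inner product. Theorem~\ref{CIPS} then returns condition (ii).

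I expect the main obstacle to be the bookkeeping in the transfer step: one must track the factor $e^{i\theta}$, equivalently its $\cos\theta$ and $\sin\theta$ components, carefully under $\theta\mapsto-\theta$ so that the manipulation produces precisely the symmetry relation feeding Theorem~\ref{CIPS}, rather than a conjugated or sign-shifted variant of it (the change of variables naturally conjugates $\rho_{_{\infty}}$, which is why it is cleanest to argue through the real part). The identity (nd3) and the scalar-homogeneity of $\rho_{_{\infty}}$ recorded in Proposition~\ref{prop:rho-infty(ax,ax+y)} are the two structural inputs that make the whole transfer go through.
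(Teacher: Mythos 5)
Your proposal is correct and takes essentially the same route as the paper: the paper gives no separate argument, deriving the corollary as an immediate consequence of Theorem~\ref{CIPS} (both conditions being funneled through ``$X$ is a complex inner product space''), and your proof is exactly a careful instantiation of that derivation. Your only added content is the transfer step showing that symmetry of $\rho_{_{+}}$ forces symmetry of $\re\rho_{_{\infty}}$ via (nd3) and the substitution $\theta\mapsto-\theta$ --- which is indeed all that the proof of Theorem~\ref{CIPS} actually uses --- a detail the paper leaves implicit.
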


\section{Linear mappings preserving $\rho_{_{\infty}}$-orthogonality}
The problem of determining the structure of linear mappings between normed linear spaces,
which leave certain properties invariant, has been considered in several papers.
These are the so-called linear preserver problems.
The study of linear orthogonality preserving mappings can be considered as a part
of the theory of linear preservers, see \cite{B.T, C.L.2015, Ch4, C.W.2, K.R, K, Wo.2012} and the references therein.
\begin{dfn}
A mapping $T: X \to Y$, between complex normed spaces $X$ and $Y$, is called
$\rho_\infty$-orthogonality preserving if
\begin{equation*}
\forall_{x, y \in X}\,\, x\perp_{\rho_{_{\infty}}} y \Longrightarrow Tx\perp_{\rho_{_{\infty}}} Ty.
\end{equation*}
\end{dfn}
The aim of this section is to present results concerning the linear mappings which
preserve $\rho_{_{\infty}}$-orthogonality.
Recall that (see \cite{A.S.T, Dra}) a convex function $\phi : X\to \R$ is said to be
G\^{a}teaux differentiable at $x\in X$ if the limit
\begin{equation*}
  \phi_x(y) = \lim_{t\to0} \frac{\phi(x+ty)-\phi(x)}{t}
\end{equation*}
exists for all $y\in X$. It is well known that if $\phi$ is continuous and
G\^{a}teaux differentiable at $x$, then $\phi_x : X\to \R$, $y\mapsto \phi_x(y)$
is a bounded real linear functional. The map $\phi_x$ is called the G\^{a}teaux differential of $\phi$ at $x$.

We need the following lemmas.
\begin{lem}[{\cite[Proposition 2.1.]{B.T}}]\label{lem:unique support functional}
Let $(X, \|\!\cdot\!\|)$ be a complex normed space and let $X$ be smooth at $x\in X\setminus\{0\}$.
Then $F_x = f_x + i f_{ix}$ is the unique support functional at $x$, where $f_x(y) = \frac{\rho_{_{+}}(x,y)}{\|x\|}$ for all $y\in X$.
\end{lem}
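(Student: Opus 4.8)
The plan is to postpone smoothness to the very last step. Since $X$ is smooth at $x$, the set $J(x)$ of support functionals at $x$ is a singleton, so it suffices to exhibit \emph{one} functional lying in $J(x)$ and to check that it coincides with $F_x$; uniqueness is then automatic. Concretely, I would verify that $F_x = f_x + i f_{ix}$ is (a) complex-linear, (b) satisfies $F_x(x) = \|x\|$, and (c) has norm $1$. Granting these three facts, $F_x \in J(x)$, and the definition of a smooth point delivers the uniqueness claim.

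First I would record that, at a smooth point, $f_x(y) = \rho_{_{+}}(x,y)/\|x\|$ is a bounded \emph{real}-linear functional: additivity and real-homogeneity of $\rho_{_{+}}(x,\cdot)$ hold precisely because smoothness forces $\rho_{_{+}} = \rho_{_{-}}$, so that $f_x$ is the G\^{a}teaux differential of the norm at $x$, while $|f_x(y)| \le \|y\|$ is exactly (nd4). Since $ix$ is again a smooth point (multiplication by $\bar i$ is a bijection $J(x)\to J(ix)$), the same applies to $f_{ix}$. The crucial ingredient is then the identity
\begin{equation*}
  f_{ix}(y) = -f_x(iy) \qquad (y\in X).
\end{equation*}
To obtain it I would apply (nd3) to get $\rho_{_{+}}(ix,y) = \rho_{_{+}}(x,-iy)$, and then combine (nd1) with $\rho_{_{+}} = \rho_{_{-}}$ (smoothness) to rewrite $\rho_{_{+}}(x,-iy)$ as $-\rho_{_{+}}(x,iy)$. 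Feeding this identity, together with its consequence $f_{ix}(iy) = f_x(y)$, into $F_x(iy) = f_x(iy) + i f_{ix}(iy)$ gives $F_x(iy) = iF_x(y)$, which upgrades the real-linearity of $f_x + i f_{ix}$ to full complex-linearity.

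It remains to check (b) and (c). For (b), $f_x(x) = \rho_{_{+}}(x,x)/\|x\| = \|x\|$, while applying (nd2) at the point $ix$ (writing $x = -i(ix)$ and using $\re(-i)=0$) yields $\rho_{_{+}}(ix,x)=0$, so $f_{ix}(x)=0$ and hence $F_x(x)=\|x\|$. For (c), note $\re F_x(y) = f_x(y)$ is bounded by $\|y\|$ thanks to (nd4); choosing $\gamma\in\C$ with $|\gamma|=1$ and $\gamma F_x(y) = |F_x(y)|$, complex-linearity gives $|F_x(y)| = \re F_x(\gamma y) = f_x(\gamma y) \le \|y\|$, so $\|F_x\|\le 1$, and $F_x(x)=\|x\|$ forces equality. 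Thus $F_x\in J(x)$, and smoothness makes it the unique support functional. I expect the one genuinely delicate point to be the complex-linearity step, that is, the identity $f_{ix}(y)=-f_x(iy)$, since this is exactly where smoothness (equivalently $\rho_{_{+}}=\rho_{_{-}}$) is indispensable; the remaining steps are direct appeals to (nd2) and (nd4) together with the standard reconstruction of a complex functional from its real part.
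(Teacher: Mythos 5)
Your proof is correct, but there is nothing in the paper to compare it against: the paper states this lemma without proof, importing it verbatim as \cite[Proposition 2.1]{B.T} (Blanco--Turn\v{s}ek). So what you have produced is a genuinely self-contained argument, and it checks out. The verification of the key identity $f_{ix}(y) = -f_x(iy)$ via (nd3), (nd1) and $\rho_{_{+}}=\rho_{_{-}}$ at a smooth point is sound, as are the complex-linearity upgrade, the evaluation $F_x(x)=\|x\|$ via (nd2) at the base point $ix$, and the norm-one argument by rotating with $\gamma$. One remark: you can shortcut most of the computation by leaning on (nd5), which the paper lists. At a smooth point $J(x)=\{F\}$, (nd5) gives immediately $f_x = \re F$ (real-linearity and boundedness for free, no need to route through G\^{a}teaux differentiability or $\rho_{_{+}}=\rho_{_{-}}$); your own observation that $g\mapsto g(-i\,\cdot)$ is a bijection $J(x)\to J(ix)$ then yields $f_{ix}(y) = \re F(-iy) = \im F(y)$, so that
\begin{equation*}
F_x(y) = f_x(y) + i f_{ix}(y) = \re F(y) + i \im F(y) = F(y),
\end{equation*}
i.e.\ $F_x$ \emph{is} the unique support functional, with no separate linearity, normalization, or norm-one checks needed. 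Your longer route buys a proof that exhibits $F_x\in J(x)$ directly from the norm-derivative axioms; the (nd5) route buys brevity by identifying $F_x$ with the functional whose existence and uniqueness smoothness already guarantees. Either is acceptable; only be aware that your claim ``smoothness forces $\rho_{_{+}}=\rho_{_{-}}$'' itself needs (nd1) plus (nd5) (or an equivalent citation), so stating it as a consequence of (nd5) costs nothing extra.
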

\begin{lem}\label{lem:ker Fx}
Let $(X, \|\!\cdot\!\|)$ be a complex normed space. If $X$ is smooth at $x\in X\setminus\{0\}$
and $F_x$ is the unique support functional at $x$,
then the following are equivalent, for every $y\in X$.
  \begin{enumerate}[\upshape(i)]
    \item $y\in \ker F_x$,
    \item $\rho_{_{\infty}}(x,y)=0$.
    \end{enumerate}
\end{lem}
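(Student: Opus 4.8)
The plan is to derive a clean closed form for $\rho_{_{\infty}}(x,y)$ in terms of the unique support functional $F_x$, namely $\rho_{_{\infty}}(x,y) = \|x\|\,\overline{F_x(y)}$, after which the equivalence (i)$\Leftrightarrow$(ii) is immediate because $x\neq 0$ forces $\|x\|>0$.

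First I would exploit smoothness at $x$: since $X$ is smooth at $x$, the set $J(x)$ is the singleton $\{F_x\}$, so the maximum in (nd5) is attained at the single element $F_x$. This upgrades (nd5) from a ``$\max$''-identity to the exact linear identity $\rho_{_{+}}(x,z) = \|x\|\,\re F_x(z)$ for every $z\in X$. Applying it with $z=e^{i\theta}y$ and using complex-linearity of $F_x$ gives $\rho_{_{+}}(x,e^{i\theta}y) = \|x\|\,\re\bigl(e^{i\theta}F_x(y)\bigr)$, so the integrand defining $\rho_{_{\infty}}$ (Definition~\ref{dfn:rho-infty}) becomes an honest trigonometric polynomial in $\theta$.

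Next, writing $F_x(y)=a+ib$ with $a,b\in\R$, I would substitute into the integral and evaluate it using $\int_0^{2\pi}\cos^2\theta\,d\theta = \int_0^{2\pi}\sin^2\theta\,d\theta = \pi$ and $\int_0^{2\pi}\sin\theta\cos\theta\,d\theta = 0$. Concretely,
\[
\rho_{_{\infty}}(x,y) = \frac{\|x\|}{\pi}\int_0^{2\pi} e^{i\theta}\bigl(a\cos\theta - b\sin\theta\bigr)\,d\theta = \frac{\|x\|}{\pi}\,\pi(a-ib) = \|x\|\,\overline{F_x(y)},
\]
which is essentially the computation already carried out in the final display of the proof of Theorem~\ref{CIPS}. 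Since $\|x\|>0$, this yields $\rho_{_{\infty}}(x,y)=0$ if and only if $\overline{F_x(y)}=0$, i.e. $F_x(y)=0$, which is exactly $y\in\ker F_x$; this closes both implications simultaneously.

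The one step I would treat with care, and which I regard as the crux, is the passage from the inequality-flavoured formula (nd5) to the exact identity $\rho_{_{+}}(x,\cdot)=\|x\|\,\re F_x(\cdot)$: this is precisely what smoothness at $x$ provides (via Lemma~\ref{lem:unique support functional}), and it is what makes the integrand linear in $F_x(y)$ so that the integral collapses cleanly. Everything after that is a routine evaluation of elementary trigonometric integrals, so no further obstacle is anticipated.
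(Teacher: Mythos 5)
Your proof is correct and follows essentially the same route as the paper: smoothness at $x$ upgrades (nd5) to the exact identity $\rho_{_{+}}(x,z)=\|x\|\re F_x(z)$, and the elementary trigonometric integrals then collapse $\rho_{_{\infty}}(x,y)$ to $\|x\|\,\overline{F_x(y)}$, which is precisely what the paper's proof of (ii)$\Rightarrow$(i) computes in the equivalent form $\rho_{_{\infty}}(x,y)=\rho_{_{+}}(x,y)+i\rho_{_{+}}(x,iy)$ together with $\|x\|F_x(y)=\rho_{_{+}}(x,y)-i\rho_{_{+}}(x,iy)$. The only organizational difference is that the paper proves (i)$\Rightarrow$(ii) separately by observing the integrand vanishes pointwise, whereas your single closed form settles both implications at once.
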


\begin{proof}
(i) $\Rightarrow$(ii)
Suppose $y\in\ker F_x$. Then $e^{i\theta}y\in \ker F_x$, from which we get
$\rho_{_{+}}(x,e^{i\theta}y)=0$, for all $\theta\in \R$. Therefore
\begin{equation*}
\rho_{_{\infty}} (x,y) = \frac1{\pi} \int_0^{2\pi} e^{i\theta} \rho_{_{+}}(x,e^{i\theta}y) d\theta= 0.
\end{equation*}

(ii) $\Rightarrow$ (i)
Since $X$ is smooth at $x\in X\setminus\{0\}$, we have
 $\rho_+(x,y) = \|x\| f_x(y)$, for all $y\in X$, and thus it is real linear
  on its second variable $y$. Therefore,
  \begin{equation*}
    \rho_{_{+}}(x,e^{i\theta}y) = \cos \theta \rho_{_{+}}(x,y) + \sin\theta \rho_{_{+}}(x,iy).
  \end{equation*}
Hence,
  \begin{align*}
    \rho_{_{\infty}}(x,y)
      & = \frac1{\pi} \int_0^{2\pi} (\cos\theta + i\sin\theta)\rho_{_{+}}(x,e^{i\theta}y) d\theta \\
      & = \frac1{\pi} \int_0^{2\pi} \Big(\cos^2\theta \rho_{_{+}}(x,y) + \cos\theta \sin\theta \rho_{_{+}}(x,iy)\Big) d\theta \\
      & \qquad \qquad \qquad
         + \frac{i}{\pi} \int_0^{2\pi} \Big(\sin\theta \cos \theta \rho_{_{+}}(x,y) + \sin^2\theta \rho_{_{+}}(x,iy)\Big) d\theta \\
      & = \rho_{_{+}}(x,y) +i\rho_{_{+}}(x,iy).
  \end{align*}
Since $\rho_{_{\infty}}(x,y)=0$, we must have $\rho_{_{+}}(x,y)=\rho_{_{+}}(x,iy)=0$.
Thus
  \begin{equation*}
    \|x\|F_x(y) = \rho_+(x,y) + i \rho_+(ix,y) = \rho_+(x,y) - i \rho_+(x,iy) = 0,
  \end{equation*}
  meaning that $y\in \ker F_x$.
\end{proof}
We are now in a position to prove the main result of this section.
We use some ideas of \cite[Theorem 3.1]{B.T}.
\begin{thm}\label{T41}
  Let $X$ and $Y$ be complex normed spaces and let $T:X\to Y$ be a non-zero bounded linear mapping.
  The following conditions are equivalent.
  \begin{enumerate}[\upshape(i)]
    \item $T$ preserves $\rho_{_{\infty}}$-orthogonality,
    \item $\|Tx\| = \|T\| \|x\|$, for all $x\in X$,
    \item $\rho_{_{\infty}}(Tx,Ty) = \|T\|^2 \rho_{_{\infty}}(x,y)$, for all $x,y\in X$.
  \end{enumerate}
      If $X = Y$, then each one of these assertions is also equivalent to
  \begin{enumerate}[\upshape(iv)]
    \item there exists a semi-inner product $[\cdot\,|\,\cdot]:X\times X\longrightarrow \mathbb{C}$ satisfying
\begin{align*}
[Tx,Ty] = \|T\|^2[x,y] \qquad (x, y\in X).
\end{align*}
  \end{enumerate}
\end{thm}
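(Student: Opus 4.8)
The plan is to establish the equivalence by proving the cycle (i)$\Rightarrow$(ii)$\Rightarrow$(iii)$\Rightarrow$(i), and then treating (iv) separately as an equivalent reformulation when $X=Y$. The implication (iii)$\Rightarrow$(i) is immediate: if $\rho_{_{\infty}}(Tx,Ty)=\|T\|^2\rho_{_{\infty}}(x,y)$ and $x\perp_{\rho_{_{\infty}}}y$, then $\rho_{_{\infty}}(Tx,Ty)=0$, so $Tx\perp_{\rho_{_{\infty}}}Ty$. Likewise (ii)$\Rightarrow$(iii) should follow by a homogeneity argument: from $\|Tx\|=\|T\|\|x\|$ one expects $\rho_{_{+}}(Tx,Ty)=\|T\|^2\rho_{_{+}}(x,y)$ (since $T$ scales the norm by the constant factor $\|T\|$, the one-sided difference quotients defining $\rho_{_{+}}$ get multiplied by $\|T\|^2$), and then integrating the defining formula for $\rho_{_{\infty}}$ gives (iii).

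The hard part is (i)$\Rightarrow$(ii), and this is where I would invest the bulk of the work, following the ideas of \cite[Theorem 3.1]{B.T}. First I would reduce to the smooth case. At any smooth point $x\neq0$, Lemma \ref{lem:ker Fx} identifies $\{y:\rho_{_{\infty}}(x,y)=0\}$ with $\ker F_x$, where $F_x$ is the unique support functional. So at a smooth point the $\rho_{_{\infty}}$-orthogonality complement of $x$ is exactly the kernel of a nonzero continuous linear functional, a hyperplane. The orthogonality-preserving hypothesis then forces $\ker F_x\subseteq \ker F_{Tx}$ (after checking $Tx\neq0$), which for continuous linear functionals means $F_{Tx}=\lambda_x F_x$ for some scalar $\lambda_x$. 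The key computation is to show that $\lambda_x$ is in fact a constant independent of $x$ and that its modulus is forced to be $\|T\|^2/\|\cdot\|$-related, yielding $\|Tx\|=\|T\|\|x\|$ on the dense set of smooth points; continuity of the norm then extends this to all of $X$. To push this through I would exploit that the smooth points are dense (the norm, being convex and continuous, is Gâteaux differentiable on a dense $G_\delta$, as recalled before the lemmas) so that equality on smooth points propagates everywhere.

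For the equivalence with (iv) when $X=Y$, I would argue (ii)$\Leftrightarrow$(iv). Assuming (ii), I would construct the desired semi-inner product by transporting a semi-inner product along $T$: given any semi-inner product $[\cdot,\cdot]$ on $X$, the scaling $\|Tx\|=\|T\|\|x\|$ should make the formula $[Tx,Ty]=\|T\|^2[x,y]$ consistent with properties (sip1)--(sip4), so one defines $[x|y]:=[x,y]$ and checks the identity directly using (ii) together with $|[Tx,Ty]|\le\|Tx\|\|Ty\|=\|T\|^2\|x\|\|y\|$. Conversely, if such a semi-inner product exists, then taking $x=y$ in (iv) and using (sip3) gives $\|Tx\|^2=\|T\|^2\|x\|^2$, which is (ii). The main obstacle throughout remains the smooth-point argument in (i)$\Rightarrow$(ii): one must handle the non-smooth points carefully and verify that the proportionality constant $\lambda_x$ genuinely stabilizes to $\|T\|^2$, and I would isolate that as the technical heart of the proof.
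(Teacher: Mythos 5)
Your easy directions are fine and agree with the paper: (iii)$\Rightarrow$(i) is immediate, (ii)$\Rightarrow$(iii) follows from the homogeneity of the difference quotients exactly as you say, and (iv)$\Rightarrow$(ii) is the evaluation at $x=y$. The two hard directions, however, both contain genuine gaps. In (i)$\Rightarrow$(ii) you rest everything on the claim that the smooth points of $X$ are dense (``the norm \dots is G\^{a}teaux differentiable on a dense $G_\delta$''). That is Mazur's theorem, which requires separability; the theorem here is stated for arbitrary complex normed spaces, and, for instance, $\ell^1(\Gamma)$ with $\Gamma$ uncountable has \emph{no} smooth points at all (given $x$, pick $\gamma$ outside its countable support; then $\|x+te_\gamma\|=\|x\|+|t|$ is not differentiable at $t=0$). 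So the dense-set-plus-continuity scheme cannot even start. Moreover, even at a smooth point $x$ of $X$, nothing guarantees that $Tx$ is a smooth point of $Y$, so the functional $F_{Tx}$ you want to compare with $F_x$ need not exist, and Lemma~\ref{lem:ker Fx} is not applicable on the $Y$ side. The paper's proof avoids both problems: it first proves that $T$ is injective --- a genuinely nontrivial step, done by a perturbation argument choosing $\beta$ with $\frac{\|y\|}{\|x+\beta y\|}\beta<\frac{1}{1+2\mathcal{R}(X^*)}$ and invoking Theorem~\ref{thm:Cauchy-Sw-inequality}; your parenthetical ``after checking $Tx\neq0$'' conceals exactly this --- and then works inside the two-dimensional subspace $M$ spanned by $x,y$, comparing $\|\cdot\|$ with the pulled-back norm $\|u\|_T:=\|Tu\|$ \emph{on $M$}. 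On a two-dimensional space both norms are G\^{a}teaux differentiable off a negligible set, and since $\rho_{_{\infty}}$ computed in $(M,\|\cdot\\|_T)$ at $(u,v)$ coincides with $\rho_{_{\infty}}(Tu,Tv)$, Lemma~\ref{lem:ker Fx} applies to both norms at the same point $u$, yielding $\ker F_u\subseteq\ker G_u$; this is the correct substitute for your $F_{Tx}=\lambda_xF_x$, and from there the paper defers to \cite[Theorem 3.1]{B.T}.

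Your (ii)$\Rightarrow$(iv) is also wrong as stated: it is false that an \emph{arbitrary} semi-inner product on $X$ satisfies $[Tx,Ty]=\|T\|^2[x,y]$ once $\|Tx\|=\|T\|\,\|x\|$. Counterexample: $X=(\C^2,\|\cdot\|_\infty)$, $T(x_1,x_2)=(x_2,x_1)$ (a surjective isometry, so (ii) holds with $\|T\|=1$), and the s.i.p. which at every $y$ with $|y_1|\geq|y_2|$ uses the first-coordinate supporting functional, so that $[x,(1,1)]=x_1$; then $[Tx,T(1,1)]=x_2\neq x_1$ in general. The identity in (iv) is not a formal consequence of (sip1)--(sip4) together with (ii); one must \emph{construct} a special s.i.p. adapted to $T$. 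The paper does this by citing Koehler--Rosenthal \cite[Theorem 1]{K.R} applied to the isometry $U=T/\|T\|$, which produces an s.i.p. invariant under $U$ (its proof is an averaging argument); that citation is the actual content of (ii)$\Rightarrow$(iv) and cannot be replaced by the direct check you propose.
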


\begin{proof}
The implications (ii)$\Rightarrow$(iii) and (iii)$\Rightarrow$(i) are clear.

Now, suppose that (i) holds.
To prove that $T$ is
  a scalar multiple of isometry, we show that
  \begin{enumerate}[\upshape(1)]
    \item $T$ is injective; that is $Tx=0$ implies $x=0$;
    \item $T$ is an isometry; that is $\|x\|=\|y\|$ implies $\|Tx\| = \|Ty\|$.
  \end{enumerate}

  To prove (1), suppose that $Tx = 0$, for some $x\neq0$. Let $y$ be an element
  in $X$ independent of $x$, and choose a non-zero $\beta \in \R$ such that
  \begin{equation}\label{eqn:choose=beta}
    0 < \frac{\|y\|}{\|x + \beta y\|} \beta  < \frac{1}{\big(1+2\mathcal{R}(X^*)\big)}.
  \end{equation}
Let $z = x + \beta y$.
By Lemma \ref{prop:exists-a-such-that-rho-infty(x,ax+y)=0},
  $z \perp_{\rho_{_{\infty}}} (\alpha z + y)$ with $\alpha = -\frac{\overline{\rho_{_{\infty}}}(z,y)}{\|z\|^2}$.
  Since $T$ preserves $\rho_{_{\infty}}$-orthogonality, we get
  $Tz \perp_{\rho_{_{\infty}}} T(\alpha z + y)$.
  Thus $\beta Ty \perp_{\rho_{_{\infty}}} (1+\alpha\beta)Ty$.
  By Proposition \ref{prop:rho-infty(ax,ax+y)}, we have
  \begin{equation}\label{eqn:0=dots}
    0 = \rho_{_{\infty}}(\beta Ty, (1+\alpha\beta)Ty)
      = \beta (1+\bar\alpha\beta) \|Ty\|^2.
  \end{equation}
Using Theorem \ref{thm:Cauchy-Sw-inequality}, from \eqref{eqn:choose=beta}, we have
  \begin{equation*}
    |\bar\alpha \beta| \leq \big(1+2\mathcal{R}(X^*)\big)\frac{\|y\|}{\|z\|} \beta < 1.
  \end{equation*}
Therefore, $1+ \bar \alpha \beta \neq 0$. This, together with \eqref{eqn:0=dots}, yields that
  $Ty=0$, for all $y$ independent of $x$ and hence $T=0$. This contradiction proves (1).

  Now, we prove that $\|x\|=\|y\|$ implies $\|Tx\|=\|Ty\|$.
  This is clear if $x$ and $y$ are linearly dependent.
  Suppose that $x$ and $y$ are linearly independent.
  Let $M$ be the complex linear subspace of $X$
  generated by $x,y$. For $u\in M$, define ${\|u\|}_T := \|Tu\|$. Since $T$ is injective,
  ${\|\!\cdot\!\|}_T$ is a norm on $M$. Let $\Delta$ be the set of those points $u\in M$ at which
  at least one of the norms, $\|\cdot \|$ or $\enorm_T$, is not Gateaux differentiable. For
  $u\in M \setminus \Delta$, let $F_u$ and $G_u$ denote the unique support functionals at $u$
  with respect to, respectively, $\|\!\cdot\!\|$ and ${\|\!\cdot\!\|}_T$. Let $v\in \ker F_u$.
  Since $(M, \|\!\cdot\!\|)$ is smooth at $u$, by Lemma \ref{lem:ker Fx}, $\rho_{_{\infty}}(u,v)=0$.
  Hence $\rho_{_{\infty}}(Tu, Tv)=0$. Since $(M, {\|\!\cdot\!\|}_T)$ is smooth at $u$, again by Lemma \ref{lem:ker Fx},
  we get $u\in \ker G_u$. So, $\ker F_u \subset \ker G_u$.
  The rest of the proof is similar to that of \cite[Theorem 3.1]{B.T}.

  Now, we show (ii)$\Rightarrow$(iv). If (ii) holds, then the mapping
  $U=\frac{T}{\|T\|}:X\longrightarrow X$ is an isometry on $X$.
  By \cite[Theorem 1]{K.R}, there exists a semi-inner product
  $[\cdot,\cdot]\,:X\times X\longrightarrow \mathbb{C}$ such that $[Ux, Uy] = [x, y]$ for all $x, y\in X$.
  Thus $[Tx, Ty] = \|T\|^2[x, y]$ for all $x, y\in X$.

   (iv)$\Rightarrow$(ii) Trivial.
\end{proof}

Finally, taking $X = Y$ and $T = id$, the identity map of $X$,
one obtains, from Theorem \ref{T41}, the following result.

\begin{cor}
Let $X$ be a complex space endowed with two norms ${\|\!\cdot\!\|}_1$ and ${\|\!\cdot\!\|}_2$,
which generate respective functionals $\rho_{_{\infty, 1}}$ and $\rho_{_{\infty, 2}}$.
Then the following conditions are equivalent:
   \begin{enumerate}[\upshape(i)]

      \item there exist constants $0 < m \leq M$ such that
          \begin{equation*}
           m|\rho_{_{\infty, 1}}(x, y)|
               \leq |\rho_{_{\infty, 2}}(x, y)|
                \leq M |\rho_{_{\infty, 1}}(x, y)| \quad (x, y\in X),
          \end{equation*}

      \item the spaces $(X, {\|\!\cdot\!\|}_1)$ and $(X, {\|\!\cdot\!\|}_2)$ are isometrically isomorphic.
   \end{enumerate}
\end{cor}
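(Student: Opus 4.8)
The plan is to obtain this corollary as a direct specialization of Theorem~\ref{T41}, since that theorem already contains the heavy machinery (injectivity, the isometry argument via G\^{a}teaux differentiability, and the equivalence with scalar multiples of isometries). The setup takes a single complex space $X$ carrying two norms ${\|\!\cdot\!\|}_1$ and ${\|\!\cdot\!\|}_2$; I would apply Theorem~\ref{T41} to the identity map viewed as a map between two normed spaces, namely $\mathrm{id}:(X,{\|\!\cdot\!\|}_1)\to(X,{\|\!\cdot\!\|}_2)$. The two conditions in the corollary should then correspond, respectively, to the hypothesis that $\mathrm{id}$ is $\rho_{_{\infty}}$-orthogonality preserving (phrased through the two-sided comparison of $|\rho_{_{\infty,1}}|$ and $|\rho_{_{\infty,2}}|$) and to the conclusion that $\mathrm{id}$ is a scalar multiple of an isometry, which for the identity forces the two norms to be proportional, i.e.\ $(X,{\|\!\cdot\!\|}_1)$ and $(X,{\|\!\cdot\!\|}_2)$ isometrically isomorphic.

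First I would verify the direction (i)$\Rightarrow$(ii). Assuming $m|\rho_{_{\infty,1}}(x,y)|\le|\rho_{_{\infty,2}}(x,y)|\le M|\rho_{_{\infty,1}}(x,y)|$ for all $x,y$, I observe that $\rho_{_{\infty,1}}(x,y)=0$ holds if and only if $\rho_{_{\infty,2}}(x,y)=0$: the lower bound gives one implication and the upper bound the other (when $\rho_{_{\infty,1}}(x,y)=0$ the upper inequality forces $\rho_{_{\infty,2}}(x,y)=0$, and conversely the lower inequality handles the reverse). This says precisely that $\perp_{\rho_{_{\infty,1}}}$ and $\perp_{\rho_{_{\infty,2}}}$ coincide, so $\mathrm{id}$ preserves $\rho_{_{\infty}}$-orthogonality from $(X,{\|\!\cdot\!\|}_1)$ to $(X,{\|\!\cdot\!\|}_2)$. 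Applying Theorem~\ref{T41}, condition~(i)$\Rightarrow$(ii) of that theorem yields ${\|x\|}_2=\|\mathrm{id}\|\,{\|x\|}_1$ for all $x$, where the operator norm $\|\mathrm{id}\|$ is computed between the two normed structures; this is exactly an isometric isomorphism after rescaling, giving~(ii).

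For (ii)$\Rightarrow$(i), if the two normed spaces are isometrically isomorphic then in particular there is a positive constant $\lambda$ with ${\|x\|}_2=\lambda{\|x\|}_1$ for all $x$ (the isometry, composed with the identity identification of the underlying space, is a scalar multiple of an isometry). Then by Proposition~\ref{prop:rho-infty(ax,ax+y)} and the integral definition of $\rho_{_{\infty}}$, rescaling the norm by $\lambda$ rescales $\rho_{_{\infty}}$ by $\lambda^2$, so $\rho_{_{\infty,2}}(x,y)=\lambda^2\rho_{_{\infty,1}}(x,y)$ and the comparison holds with $m=M=\lambda^2$.

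The main obstacle I anticipate is the careful matching of the operator-norm bookkeeping: in Theorem~\ref{T41} the constant $\|T\|^2$ appears, and here $T=\mathrm{id}$ has operator norm measured with respect to two different norms, so I must be sure that the ``scalar multiple of an isometry'' conclusion genuinely produces a single proportionality constant relating ${\|\!\cdot\!\|}_1$ and ${\|\!\cdot\!\|}_2$ rather than merely a two-sided equivalence. A second point needing care is the step that converts the two-sided inequality in~(i) into exact equality of the orthogonality relations; one must confirm that the hypothesis is strong enough to give $\perp_{\rho_{_{\infty,1}}}=\perp_{\rho_{_{\infty,2}}}$ and not just an inclusion, which is why I would argue both implications from the lower and upper bounds separately.
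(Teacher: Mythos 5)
Your overall route is the same as the paper's: the paper proves this corollary in one line by taking $X=Y$ and $T=\mathrm{id}$ in Theorem~\ref{T41}, and your (i)$\Rightarrow$(ii) argument is the correct way to fill that line in. One detail you should add there: Theorem~\ref{T41} requires $T$ to be \emph{bounded}, and boundedness of $\mathrm{id}\colon(X,{\|\!\cdot\!\|}_1)\to(X,{\|\!\cdot\!\|}_2)$ is not a hypothesis of the corollary; it follows from (i) by putting $y=x$, since $\rho_{_{\infty,k}}(x,x)={\|x\|}_k^2$ gives $m{\|x\|}_1^2\le{\|x\|}_2^2\le M{\|x\|}_1^2$. (Also, for orthogonality preservation only the inclusion $\perp_{\rho_{_{\infty,1}}}\subseteq\perp_{\rho_{_{\infty,2}}}$ is needed, i.e.\ the upper bound alone; the coincidence of the two relations is more than required.)

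The genuine gap is in (ii)$\Rightarrow$(i). Your parenthetical claim that an isometric isomorphism between $(X,{\|\!\cdot\!\|}_1)$ and $(X,{\|\!\cdot\!\|}_2)$ yields a constant $\lambda>0$ with ${\|x\|}_2=\lambda{\|x\|}_1$ for all $x$ is false: an isometric isomorphism is an arbitrary linear bijection $U$ with ${\|Ux\|}_2={\|x\|}_1$, and it need not bear any relation to the identity map; ``composing with the identity identification'' does not turn it into a scalar multiple of the identity. Concretely, on $X=\C^2$ take ${\|x\|}_1^2=|x_1|^2+|x_2|^2$ and ${\|x\|}_2^2=|x_1|^2+4|x_2|^2$. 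Then $U(x_1,x_2)=(x_1,\tfrac12 x_2)$ is an isometric isomorphism, so (ii) holds literally; but both norms are Hilbertian, so $\rho_{_{\infty,k}}$ is the corresponding inner product, and for $x=(1,1)$, $y=(4,-1)$ one gets $\rho_{_{\infty,2}}(x,y)=0$ while $\rho_{_{\infty,1}}(x,y)=3$, so no $m>0$ can satisfy (i). Thus (ii), read literally, does not imply (i), and no argument can repair this. The implication becomes true only when (ii) is read as ``the identity map is a scalar multiple of an isometry,'' i.e.\ the norms are proportional --- which is exactly what Theorem~\ref{T41}(ii) with $T=\mathrm{id}$ delivers, and what your own (i)$\Rightarrow$(ii) argument actually proves. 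Under that reading your scaling computation $\rho_{_{\infty,2}}=\lambda^2\rho_{_{\infty,1}}$ is correct and gives (i) with $m=M=\lambda^2$. In fairness, this imprecision sits in the paper's own statement and one-line proof as well; but your write-up asserts the false step explicitly, so it has to be flagged: either (ii) should be restated as norm proportionality, or the claimed equivalence fails.
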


\bibliographystyle{amsplain}

\end{document}